\documentclass[11pt]{amsproc}
\usepackage{mathrsfs}
\usepackage{stmaryrd}
\usepackage{cases}
\usepackage{amssymb}
\usepackage{amsmath}
\usepackage{amsfonts}
\usepackage{graphicx}
\usepackage{amsmath,amstext,amsbsy,amssymb}

\newtheorem{theorem}{Theorem}[section]
\newtheorem{lemma}[theorem]{Lemma}

\newtheorem{proposition}[theorem]{Proposition}
\newtheorem{corollary}[theorem]{Corollary}

\theoremstyle{definition}

\theoremstyle{remark}
\newtheorem{remark}[theorem]{Remark}

\numberwithin{equation}{section} \errorcontextlines=0

\newcommand{\Pf}{\mbox{Pf}}

\begin{document}

\title[Quantum Pfaffians and Hyper-Pfaffians]
{Quantum Pfaffians and Hyper-Pfaffians}
\author{Naihuan Jing}
\address{Department of Mathematics,
   North Carolina State University,
   Raleigh, NC 27695, USA}
\email{jing@math.ncsu.edu}
\author{Jian Zhang}
\address{School of Sciences,
South China University of Technology, Guangzhou, Guangdong 510640, China}
\email{zhang.jian@scut.edu.cn}
\keywords{quantum Pfaffians, $q$-forms, quantum determinants}
\thanks{*Corresponding author: Jian Zhang}
\subjclass[2010]{Primary: 17B37; Secondary: 58A17, 15A75, 15B33, 15A15}

\begin{abstract}

The concept of the quantum Pfaffian is rigorously examined and refurbished
using the new method of quantum exterior algebras.
We derive a complete family of Pl\"ucker relations for the quantum linear transformations, and then use them to give
an optimal set of relations required for the
quantum Pfaffian. We then give the formula between the quantum determinant and the quantum Pfaffian
and prove that any quantum determinant can be expressed as a quantum Pfaffian. Finally
the quantum hyper-Pfaffian is introduced, and we prove a similar
 result of expressing quantum determinants in terms of quantum hyper-Pfaffians
 at modular cases.

\end{abstract}
\maketitle
\section{Introduction}

Quantum groups are certain deformation of
the coordinate rings of the underlying Lie groups. They first appeared
in the framework of
the quantum inverse scattering method of the St. Petersberg school \cite{KS, FRT}, where
a more general deformation of $U(sl(2))$--the Sklyanin algebra was formulated
as one of the first examples.
In the nontrivial example of the quantum coordinate ring $\mbox{GL}_q(n)$ of the
general linear group \cite{AST, BG, GL, KlS, LS, J, PW}, the quantum determinant (cf. \cite{NYM, HZ}) serves as a distinguished central element
in the quantum group and it is well-known that lots of properties of the determinant can
be generalized to the quantum case (cf. \cite{TT, MT}). However when the quantum matrix is anti-symmetric,
the quantum determinant is not exactly a square element. So the classical definition
of the Pfaffian as a square root of the determinant does not work in the quantum case.
In \cite{S} a notion of a quantum Pfaffian was introduced in the context of quantum invariant
theory, but
it was not clear how it was related to the quantum determinant from the
context.

In \cite{N, JR} the invariant theory of certain quantum symplectic group was used to define
the notion of quantum anti-symmetric matrices, where
the quantum Pfaffian can be realized on the quantum coordinate ring as well, and
Ray and the first author proved that the quantum Pfaffian is equal to the quantum determinant
after change of variables. This relation between the quantum determinant
and quantum Pfaffian was derived by making use of the representation theory
of the quantum algebra. One could then ask an even more fundamental question
if it is really necessary to use the representation theory to find such an identity for the quantum Pfaffian.
Moreover when we derived the determinant identity for the quantum Pfaffian in \cite{JR} we had used certain
quadratic relations from the quantum symplectic group, and we had conjectured that they are not
necessary for the determinant identity.

In this work we will study the quantum Pfaffian using quantum exterior algebras
and re-establish the
relationship between $q$-determinant and $q$-Pfaffian using fewer relations for the
generators.
In \cite{Ma} Manin constructed the quantum matrix algebra $\mbox{GL}_q(2)$ as a transformation group preserving
a pair of dual quantum planes. Later this method was generalized to higher dimension and in \cite{Ma2}
the notion of quantum deRham complexes was introduced based on quantum exterior algebras.
We will adopt this approach to study quantum
determinants and quantum Pfaffians, and prove that the quantum determinant can be viewed
as the volume element for the de Rham $q$-form. Then we derive the quantum Pl\"ucker relations and use them
as the starting point for our discussion of quantum Pfaffians and hyper-Pfaffians.
We prove that the quantum Pfaffian can be indeed defined for a more general classes of $q$-antisymmetric matrices
and we prove some non-trivial identities between the quantum determinant and the quantum Pfaffian. In particular,
we show that it is not necessary to impose more quadratic relations on the generators for the
quantum Pfaffian. Just as in the classical case \cite{H} (see also \cite{L}) we also prove that
the quantum Pfaffian is more fundamental than the quantum determinant in the sense that
any $q$-determinant can be
expressed as a $q$-Pfaffian.

In the last section we introduce the notion of quantum hyperdeterminant generalizing Luque-Thibon's work
in the classical case \cite{LT}. An interesting new phenomenon appears that the quantum hyper-Pfaffian satisfies the non-trivial
identity only for the modular case. This probably corresponds to a similar situation studied in
physics models. It would be interesting to uncover the connection and find more applications in
physics.

\section{$q$-Determinants and $q$-Pfaffians}
\subsection{$q$-Determinants}

Let $q$ be a complex non-zero number.
Let $\Lambda_n$ be the quantum exterior algebra  $F[x_1, \cdots,x_n]/I$, where $I$ is the ideal $<x_i^2, qx_ix_j+x_jx_i (i<j)>$. For
simplicity we still use the same symbol $x_i$ for the quotient $x_i+I$. Thus the wedge products satisfy the
following relations
\begin{align}\label{qwedge1}
&x_j \wedge x_i=(-q)x_i\wedge x_j, \\
&x_i\wedge x_i=0, \label{qwedge2}
\end{align}
where $i<j$. The algebra $\Lambda$ has a natural $\mathbb Z_{n+1}$-grading and decomposes itself:
\begin{equation*}
\Lambda=\bigoplus_{k=0}^n\Lambda_k,
\end{equation*}
where the $k$th homogeneous subspace $\Lambda_k$ is spanned by $x_{i_1}\cdots x_{i_k}$,
$1\leq i_1<\cdots<i_k\leq n$. It is clear that $dim(\Lambda)=2^n$.

Let $A=Mat_q(n)$ be the complex associative algebra generated by $a_{ij}$, $1\leq i, j\leq n$ subject to the relations:
\begin{align}\label{relation a1}
&a_{ik}a_{il}=qa_{il}a_{ik}, \\\label{relation a2}
&a_{ik}a_{jk}=qa_{jk}a_{ik}, \\\label{relation a3}
&a_{jk}a_{il}=a_{il}a_{jk}, \\\label{relation a4}
&a_{ik}a_{jl}-a_{jl}a_{ik}=(q-q^{-1})a_{il}a_{jk},
\end{align}
where $i<j$ and $k<l$.

The tensor product $A\otimes \Lambda$ inherits the grading of $\Lambda$:
\begin{equation*}
A\otimes \Lambda=\sum_{i=0}^n A\otimes \Lambda^i.
\end{equation*}
For convenience the general element $a\otimes x$ of $A\otimes \Lambda$ will be simply written as $ax$ and the multiplication of $A\otimes \Lambda$ is written as
\begin{equation*}
(ax)\wedge (by)=ab(x\wedge y).
\end{equation*}
Thus $x_ka_{ij}=a_{ij}x_k$ for any admissible $i, j, k$.

For each $i=1,\ldots, n$ we
consider the following special linear element (or {\it 1-form}) $\omega_i$ in $A\otimes \Lambda$
\begin{equation*}
\omega_i=\sum_{j=1}^n a_{ij}x_j.
\end{equation*}
It follows that the $\omega_i$ satisfies the same relation of the quantum exterior algebra.
\begin{align}
&\omega_i \wedge \omega_i=0, \nonumber\\
&\omega_j \wedge \omega_i=-q\omega_i \wedge w_j,\nonumber
\end{align}
where $i<j$.

The quantum determinant of $A$ is defined as follows.
\begin{equation}
{\det}_q(A)=\sum_{\sigma\in S_n}(-q)^{l(\sigma)}a_{1,\sigma(1)}\cdots a_{n,\sigma(n)}.
\end{equation}
The following result is well-known \cite{TT, KS}.
\begin{proposition} The quantum determinant ${\det}_q(A)$ satisfies the following relation:
\begin{equation}
\omega_1 \wedge \omega_2\wedge \cdots \wedge \omega_n={\det}_q(a_{ij})x_1 \wedge x_2 \wedge \cdots \wedge x_n.
\end{equation}
\end{proposition}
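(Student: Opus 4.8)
The plan is to expand the left-hand side by multilinearity of the wedge product and then rewrite each surviving monomial of $\Lambda_n$ in terms of the standard basis $x_1\wedge\cdots\wedge x_n$.

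First I would substitute $\omega_i=\sum_j a_{ij}x_j$ and apply the multiplication rule $(ax)\wedge(by)=ab(x\wedge y)$ repeatedly to obtain
\begin{equation*}
\omega_1\wedge\cdots\wedge\omega_n=\sum_{j_1,\dots,j_n=1}^{n}a_{1,j_1}a_{2,j_2}\cdots a_{n,j_n}\,x_{j_1}\wedge\cdots\wedge x_{j_n},
\end{equation*}
where the scalars occur in the fixed order $a_{1,j_1}\cdots a_{n,j_n}$ (with increasing first index) because the factors $\omega_1,\dots,\omega_n$ enter the product in that order and, by $x_k a_{ij}=a_{ij}x_k$, the coefficients commute freely with the $x_k$. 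By \eqref{qwedge2} the monomial $x_{j_1}\wedge\cdots\wedge x_{j_n}$ vanishes unless $j_1,\dots,j_n$ are pairwise distinct, so only the terms with $(j_1,\dots,j_n)=(\sigma(1),\dots,\sigma(n))$ for some $\sigma\in S_n$ survive.

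Next I would reduce $x_{\sigma(1)}\wedge\cdots\wedge x_{\sigma(n)}$ to $x_1\wedge\cdots\wedge x_n$ by sorting the indices into increasing order. Each interchange of an adjacent out-of-order pair $x_b\wedge x_a$ with $a<b$ produces a factor $-q$ by \eqref{qwedge1}, and a bubble sort reaches the identity permutation in exactly $l(\sigma)$ such interchanges (each removing precisely one inversion), so that $x_{\sigma(1)}\wedge\cdots\wedge x_{\sigma(n)}=(-q)^{l(\sigma)}\,x_1\wedge\cdots\wedge x_n$; the scalar is independent of the chosen sequence of moves since $\Lambda_n$ is a well-defined algebra. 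Collecting terms yields
\begin{equation*}
\omega_1\wedge\cdots\wedge\omega_n=\Bigl(\sum_{\sigma\in S_n}(-q)^{l(\sigma)}a_{1,\sigma(1)}\cdots a_{n,\sigma(n)}\Bigr)x_1\wedge\cdots\wedge x_n={\det}_q(A)\,x_1\wedge\cdots\wedge x_n,
\end{equation*}
which is the assertion.

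The argument is essentially a direct computation, and the only point that calls for care is the sign bookkeeping in the second step: one must verify that permuting $x_{\sigma(1)}\wedge\cdots\wedge x_{\sigma(n)}$ into standard order contributes exactly $(-q)^{l(\sigma)}$ and that this value does not depend on how the sorting is carried out, which is guaranteed by the consistency of the defining relations of the quantum exterior algebra. It is worth noting that the relations \eqref{relation a1}--\eqref{relation a4} of $\mathrm{Mat}_q(n)$ play no role in this proof; they are needed only for the separately stated fact that the $\omega_i$ themselves satisfy the same quantum exterior relations as the $x_i$.
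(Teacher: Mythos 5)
Your proof is correct and follows essentially the same route as the paper: expand $\omega_1\wedge\cdots\wedge\omega_n$ by multilinearity, discard monomials with repeated indices via \eqref{qwedge2}, and sort each surviving $x_{\sigma(1)}\wedge\cdots\wedge x_{\sigma(n)}$ into standard order using \eqref{qwedge1} to pick up the factor $(-q)^{l(\sigma)}$. Your extra remarks on the well-definedness of the sign and on the irrelevance of the $\mathrm{Mat}_q(n)$ relations are accurate but not a different argument.
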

\begin{proof} We give a simpler proof for this well-knwon result.
Using $x_i\wedge x_i=0$ we expand $\omega_1\wedge\cdots\wedge\omega_n$ into a summation
of monomials $x_{i_1}\cdots x_{i_n}$ of distinct factors with coefficients from $A$:
\begin{align*}
\omega_1\wedge\omega_2\wedge \cdots \wedge\omega_n&=\sum_{i_1i_2\cdots i_n}a_{1i_1}a_{i_2}\cdots a_{ni_n}
x_{i_1}\wedge x_{i_2}\wedge \cdots \wedge x_{i_n}\\
&=(\sum_{\sigma\in S_n}(-q)^{l(\sigma)}a_{1,\sigma(1)}\cdots a_{n,\sigma(n)})x_1\wedge\cdots\wedge x_n,
\end{align*}
where we have reordered the monomial $x_{i_1}\wedge \cdots \wedge x_{i_n}$
using Eq. (\ref{qwedge1}).
\end{proof}

It follows from relation (\ref{relation a3}) that
\begin{equation*}
a_{\sigma(1),1}\cdots a_{\sigma(n),n}=a_{1,\sigma^{-1}(1)}\cdots a_{n,\sigma^{-1}(n)}
\end{equation*}
in algebra $A$. Since $l(\sigma^{-1})=l(\sigma)$, we then have
\begin{equation}
{\det}_q(A)=\sum_{\sigma\in S_n}(-q)^{l(\sigma)}a_{\sigma(1),1}\cdots a_{\sigma(n),n}.
\end{equation}

Let $I$ and $J$ be two subsets of $\{1,2,\cdots,n\}$ with $|I|=|J|=r$ with ordered
elements, i.e. $i_1<i_2<\cdots<i_r\in I$ and $j_1<j_2<\cdots<j_r\in J$. The quantum $r$-minor determinants are defined as \cite{NYM}
\begin{equation}
\xi^{i_1,\cdots,i_r}_{j_1,\cdots,j_r}=\sum_{\sigma\in S_r}(-q)^{l(\sigma)}a_{i_1,j_{\sigma(1)}}\cdots a_{i_r,j_{\sigma(r)}}.
\end{equation}
By relation (\ref{relation a3}) we know that
\begin{equation*}a_{i_{\sigma(1)},j_1}\cdots a_{i_{\sigma(n)},j_n}=a_{i_1,j_{\sigma^{-1}(1)}}\cdots a_{i_n,j_{\sigma^{-1}(n)}},
\end{equation*}
so we also have
\begin{equation}
\xi^{i_1,\cdots,i_r}_{j_1,\cdots,j_r}=\sum_{\sigma\in S_r}(-q)^{l(\sigma)}a_{i_{\sigma(1)},j_1}\cdots a_{i_{\sigma(r)},j_r}.
\end{equation}

\subsection{Laplace expansions of $q$-determinants and Pl\"ucker relations.}
 As the classical case the quantum determinant can also be expanded into a
 summation of quantum minors and the complement minor. The Laplace expansion
 is also treated by the exterior products. We include this for completeness.
\begin{proposition} For any $1\leq i_1,i_2,\cdots,i_n\leq n$, one has
\begin{equation*}\label{c1}
\sum_{\sigma\in S_n}(-q)^{l(\sigma)}a_{i_1,\sigma(1)}\cdots a_{i_n,\sigma(n)}=
\left\{ \begin{aligned}
&0,\ &if\ two\ i's\ coinside\\
&(-q)^{l(\pi)}{\det}_q(a_{ij})\ &if\ i's\ are\ distinct\\
\end{aligned} \right.
\end{equation*}

\begin{equation*}\label{c2}
\sum_{\sigma\in S_n}(-q)^{l(\sigma)}a_{\sigma(1),i_1}\cdots a_{\sigma(n),i_n}=
\left\{ \begin{aligned}
&0,\ &if\ two\ i's\ coinside\\
&(-q)^{l(\pi)}{\det}_q(a_{ij})\ &if\ i's\ are\ distinct\\
\end{aligned} \right.
\end{equation*}
where $\pi=\begin{pmatrix} 1 &2 &\cdots &n\\
i_1 &i_2 &\cdots &i_n\end{pmatrix}$.
\end{proposition}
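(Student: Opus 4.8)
The plan is to mimic the proof of the preceding Proposition, but now using the generalized $1$-forms $\omega_{i_1},\dots,\omega_{i_n}$ (allowing repetitions among the indices $i_1,\dots,i_n$) rather than $\omega_1,\dots,\omega_n$. First I would observe that the relations on the $\omega_i$ already recorded in the excerpt --- namely $\omega_i\wedge\omega_i=0$ and $\omega_j\wedge\omega_i=-q\,\omega_i\wedge\omega_j$ for $i<j$ --- are exactly the relations defining $\Lambda_n$, so any wedge product of the $\omega$'s can be reordered by the same combinatorial bookkeeping used for the $x$'s. Hence, if two of the indices $i_1,\dots,i_n$ coincide, then $\omega_{i_1}\wedge\cdots\wedge\omega_{i_n}=0$ by $\omega_i\wedge\omega_i=0$ (after reordering so the two equal forms are adjacent, which costs only a scalar). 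If the $i$'s are distinct, then the sequence $(i_1,\dots,i_n)$ is a permutation of $(1,\dots,n)$, namely $\pi$, and reordering $\omega_{i_1}\wedge\cdots\wedge\omega_{i_n}$ into $\omega_1\wedge\cdots\wedge\omega_n$ produces exactly the factor $(-q)^{l(\pi)}$, the same scalar one would get reordering $x_{i_1}\wedge\cdots\wedge x_{i_n}$ into $x_1\wedge\cdots\wedge x_n$.

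Next I would expand $\omega_{i_1}\wedge\cdots\wedge\omega_{i_n}=\sum_{\sigma\in S_n}(-q)^{l(\sigma)}a_{i_1,\sigma(1)}\cdots a_{i_n,\sigma(n)}\,x_1\wedge\cdots\wedge x_n$ by literally repeating the computation in the proof of the previous Proposition (it only uses $x_k a_{ij}=a_{ij}x_k$, $x_i\wedge x_i=0$, and \eqref{qwedge1}, none of which care about the particular row indices). Comparing this expansion with the reordering identity of the previous paragraph --- i.e. with $\omega_{i_1}\wedge\cdots\wedge\omega_{i_n}=(-q)^{l(\pi)}\,\omega_1\wedge\cdots\wedge\omega_n=(-q)^{l(\pi)}{\det}_q(a_{ij})\,x_1\wedge\cdots\wedge x_n$ in the distinct case, and $=0$ in the coincident case --- and then reading off the coefficient of $x_1\wedge\cdots\wedge x_n$ gives the first displayed formula. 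For the second displayed formula I would run the same argument with the "column" $1$-forms $\omega'_j=\sum_{i=1}^n a_{ij}x_i$: I would need to check that these $\omega'_j$ again satisfy $\omega'_j\wedge\omega'_j=0$ and $\omega'_k\wedge\omega'_j=-q\,\omega'_j\wedge\omega'_k$ for $j<k$, which follows from relations \eqref{relation a1}--\eqref{relation a4} in exactly the way the row version does, and that $\omega'_1\wedge\cdots\wedge\omega'_n={\det}_q(a_{ij})\,x_1\wedge\cdots\wedge x_n$ using the alternate expression for ${\det}_q(A)$ recorded just before the statement.

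The only genuinely delicate point is verifying that the column $1$-forms $\omega'_j$ satisfy the quantum exterior relations; this is the mild analogue of the (already used) fact that the $\omega_i$ do, and it is the place where relation \eqref{relation a4} with the correction term $(q-q^{-1})a_{il}a_{jk}$ actually enters, so I would write that check out explicitly. Everything else is bookkeeping with the sign/weight function $l(\cdot)$ on permutations, organized so that the reordering cost of a wedge monomial $x_{i_1}\wedge\cdots\wedge x_{i_n}$ with distinct factors is $(-q)^{l(\pi)}$ where $\pi$ sends $k\mapsto i_k$ --- a fact implicit in the proof of the first Proposition that I would state once as a lemma-free remark and then invoke for both halves. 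I do not expect any obstruction beyond this; the proposition is essentially a reindexed corollary of the volume-form description of ${\det}_q$.
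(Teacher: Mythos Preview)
Your proposal is correct and follows essentially the same approach as the paper: compute $\omega_{i_1}\wedge\cdots\wedge\omega_{i_n}$ two ways (reordering via the quantum exterior relations versus expanding in the $x$'s), and then repeat with the column forms $\omega'_i=\sum_j a_{ji}x_j$ for the second identity. The paper's proof is terser---it simply asserts that the $\omega'_i$ satisfy the same quantum exterior relations rather than writing out the check you flag as ``delicate''---but the logical structure is identical.
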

\begin{proof}It is clear that $\omega_{i_1} \wedge \omega_{i_2}\wedge
\cdots \wedge \omega_{i_n}=0$
 whenever two indices coincide.
 For any permutation $\pi$ we have
 $$\omega_{{\pi}_1} \wedge \omega_{\pi_2}\wedge \cdots \wedge \omega_{\pi_n}=(-q)^{l(\pi)}\omega_{1} \wedge \omega_{2}\wedge \cdots \wedge \omega_{n}.$$
For any composition $(i_1i_2\cdots i_n)$ we can compute that
\begin{align*}
&\omega_{{i}_1} \wedge \omega_{i_2}\wedge \cdots \wedge \omega_{i_n}\\
&=\sum_{\sigma\in S_r}(-q)^{l(\sigma)}a_{i_1, \sigma(1)}\cdots a_{i_n, \sigma(n)}.
\end{align*}
Thus the first part of proposition is proved. The second part is treated similarly
by considering $\omega_i'=\sum_{j}a_{ji}x_j$. It is easy to see that for $i<j$
$$\omega_i' \wedge \omega_i'=0$$
$$\omega_j' \wedge \omega_i'=-q\omega_i' \wedge \omega_j'.$$
\begin{align*}
\omega_1' \wedge \omega_2'\wedge \cdots \wedge \omega_n'
=&\sum_{\sigma\in S_n}(-q)^{l(\sigma)}x_{\sigma(1),1}\cdots x_{\sigma(n),n}x_1 \wedge x_2 \wedge \cdots \wedge x_n\nonumber\\
=&{\det}_q(a_{ij})_{1\leq i,j\leq n}x_1 \wedge x_2 \wedge \cdots \wedge x_n.
\end{align*}
\end{proof}

Now we discuss the Laplace expansion of quantum determinant. We first choose r indices $i_1<i_2<\cdots<i_r$ from $1,2,\cdots,n$ and
let the remaining ones be ${i_{r+1} < i_{r+2} < \cdots < i_n}$. We have
\begin{equation}\label{laplace11}
\omega_{i_1} \wedge \omega_{i_2}\wedge \cdots \wedge \omega_{i_n}=(-q)^{i_1+\cdots+i_r-\frac{r(r+1)}{2}}\omega_1 \wedge \omega_2\wedge \cdots \wedge \omega_n
\end{equation}
since $\omega_j \wedge \omega_i=(-q)\omega_i \wedge \omega_j$, if $i<j$.
On the other hand,
\begin {equation}\label{laplace12}
\begin{split}
&\omega_{i_1} \wedge \omega_{i_2}\wedge \cdots \wedge \omega_{i_n}\\
=&(\omega_{i_1}\wedge \cdots \wedge w_{i_r})\wedge(w_{i_{r+1}}\wedge \cdots \wedge \omega_{i_n})\\
=&\sum_{j_1<\cdots<j_r}\xi_{j_1\cdots j_r}^{i_1 \cdots i_r}x_{j_1}\wedge\cdots\wedge x_{j_r}\wedge
\sum_{j_{r+1}<\cdots<j_n}\xi_{j_{r+1}\cdots j_n}^{i_{r+1} \cdots i_n}x_{j_{r+1}}\wedge\cdots\wedge x_{j_{n}}\\
=&\sum_{j_1<\cdots<j_r \atop j_{r+1}<\cdots<j_n}(-q)^{{j_1+\cdots+j_r-\frac{r(r+1)}{2}}}
\xi_{j_1\cdots j_r}^{i_1\cdots i_r}\xi_{j_{r+1}\cdots j_n}^{i_{r+1}\cdots i_n}x_{1}\wedge x_{2}\wedge\cdots \wedge x_{n},
\end{split}
\end{equation}
where we have used (\ref{laplace11}). Comparing (\ref{laplace11}) and (\ref{laplace12}), we get that
\begin{equation*}
{\det}_q(a_{ij})=\sum_{j_1<\cdots<j_r, j_{r+1}<\cdots<j_n}(-q)^{(j_1+\cdots+j_r)-
(i_1+\cdots+i_r)}\xi_{j_1 \cdots j_r}^{i_1 \cdots i_r}\xi_{j_{r+1} \cdots j_n}^{i_{r+1}\cdots i_n}.
\end{equation*}

A special case ($r=1$) of the Laplace expansion of ${\det}_q(A)$ is the quantum Cramer rule:
$${\det}_q(A)=\sum_{j=1}^n(-q)^{j-i}a_{ij}\Delta_q(ij),\qquad i=1,2,\cdots,n $$
where the cofactor $\Delta_q(ij)$ is the $q$-minor of size $(n-1)\times (n-1)$ obtained by deleting
the $i$th row and $j$th column from A. The orthogonality relations now read
$$\delta_{ik}{\det}_q(A)=\sum_{j=1}^n(-q)^{j-i}a_{ij}\Delta_q(kj),i=1,2,\cdots,n$$
where $\delta_{ij}$ is the Kronecker $\delta$ symbol.

One can also work with columns. Consider other $1$-forms by summing over the rows:
$$\omega_i'=\Sigma_{j=1}^n a_{ji}x_j.$$

We choose r indices $i_1<i_2<\cdots<i_r$ from $1,2,\cdots,n$ and let the
remaining ones be ${i_{r+1} < i_{r+2} < \cdots < i_n}$. Then we have

\begin{equation}\label{laplace21}
\omega_{i_1}' \wedge \omega_{i_2}'\wedge \cdots \wedge \omega_{i_n}'
=(-q)^{i_1+i_2+\cdots+i_r-\frac{r(r+1)}{2}}\omega_1' \wedge \omega_2'\wedge \cdots \wedge \omega_n'.
\end{equation}
since $\omega_j' \wedge \omega_i'=(-q)\omega_i' \wedge \omega_j'$, if $i<j$.
On the other hand,
\begin {equation}\label{laplace22}
\begin{split}
&\omega_{i_1}' \wedge \omega_{i_2}'\wedge \cdots \wedge \omega_{i_n}'\\
=&(\omega_{i_1}'\wedge \omega_{i_2}'\wedge \cdots \wedge \omega_{i_r}')\wedge
(\omega_{i_{r+1}}'\wedge \cdots \wedge \omega_{i_n}')\\
=&\sum_{j_1<\cdots<j_r\atop j_{r+1}<\cdots<j_{n}}
\xi^{j_1\cdots j_r}_{i_1 \cdots i_r}
x_{j_1}\wedge\cdots\wedge x_{j_r}\wedge
\xi^{j_{r+1}\cdots j_n}_{i_{r+1}\cdots i_n}
x_{j_{r+1}}\wedge\cdots\wedge x_{j_{n}}\\
=&\sum_{j_1<\cdots<j_r\atop j_{r+1}<\cdots<j_{n}}(-q)^{{j_1+j_2+\cdots+j_r-\frac{r(r+1)}{2}}}
\xi^{j_1,\cdots,j_r}_{i_1,\cdots,i_r}\xi^{j_{r+1}\cdots j_n}_{i_{r+1}\cdots i_n}
x_{1}\wedge x_{2}\wedge\cdots \wedge x_{n}.
\end{split}
\end{equation}

Comparing Eqs. (\ref{laplace21}) and (\ref{laplace22}), we get that
\begin{equation*}
{\det}_q(a_{ij})=\sum_{j_1<\cdots<j_r\atop j_{r+1}<\cdots<j_{n}}(-q)^{(j_1+\cdots+j_r)-(i_1+\cdots+i_r)}
\xi^{j_1\cdots j_r}_{i_1\cdots i_r}\xi^{j_{r+1}\cdots j_n}_{i_{r+1}\cdots i_n},
\end{equation*}
which is a $q$-analogous Laplace expansion of ${\det}_q(A)$. A special case of this
Laplace expansion is
$${\det}_q(A)=\sum_{j=1}^n(-q)^{j-i}a_{ji}\Delta_q(ji),i=1,2,\cdots,n.$$
Consequently we have the following orthogonal relations
$$\delta_{ik}{\det}_q(A)=\sum_{j=1}^n(-q)^{j-i}a_{ji}\Delta_q(jk),i=1,2,\cdots,n.$$

We choose $2n$ indices from $1,2,\cdots,2n$ such that $i_1<i_2<\cdots<i_n$ and $i_{n+1}<i_{n+2}<\cdots<i_{2n}.$
Let
$$\sigma=
\begin{pmatrix}
1  &2     &\cdots &2n\\
i_1 & i_2  &\cdots &i_{2n}\end{pmatrix},
\ \ \ \
\sigma_1=
\begin{pmatrix}
1       &\cdots  &n      &n+1 \cdots &2n\\
i_{n+1} &\cdots  &i_{2n} &i_1 \cdots &i_n
\end{pmatrix}.
$$

Then $l(\sigma_1)=n^2-l(\sigma)$, and  we have the following result.

\begin{proposition}\label{thp1}
Let $\sigma=\begin{pmatrix}
1  &2     &\cdots &2n\\
i_1 & i_2  &\cdots &i_{2n}\end{pmatrix}
$ be a permutation in $S_{2n}$ such
that $i_k=k$ for $1\leq k\leq r$, $i_{r+1}<i_{r+2}<\cdots<i_{n}$  and  $i_{n+1}<i_{n+2}<\cdots<i_{2n}$. Then
\begin{equation}
\begin{split}
&\sum_{i_{r+1}<\cdots<i_{n},i_{n+1}<\cdots<i_{2n}}(-q)^{l(\sigma)}\xi^{1,\cdots,n}_{i_1,\cdots,i_n}\xi^{1,\cdots,n}_{i_{n+1},\cdots,i_{2n}}=0,\\
\end{split}
\end{equation}
\begin{equation}
\begin{split}
&\sum_{i_{r+1}<\cdots<i_{n},i_{n+1}<\cdots<i_{2n}}(-q)^{-l(\sigma)}
\xi^{1,\cdots,n}_{i_{n+1},\cdots,i_{2n}}\xi^{1,\cdots,n}_{i_1,\cdots,i_n}=0.
\end{split}
\end{equation}
\end{proposition}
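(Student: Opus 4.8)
The plan is to recognize these as the quantum Plücker relations and to obtain them from the single structural fact that the $q$-decomposable top form built from the row $1$-forms squares to zero, refined so as to pin the first $r$ columns. Throughout I take $A=Mat_q(2n)$, work in $A\otimes\Lambda_{2n}$, and set $\omega_i=\sum_{j=1}^{2n}a_{ij}x_j$ for $1\le i\le n$. Expanding by multilinearity and reordering with (\ref{qwedge1}), exactly as in the computation of $\omega_1\wedge\cdots\wedge\omega_n$ above,
\begin{equation*}
\Omega:=\omega_1\wedge\cdots\wedge\omega_n=\sum_{j_1<\cdots<j_n}\xi^{1,\ldots,n}_{j_1,\ldots,j_n}\,x_{j_1}\wedge\cdots\wedge x_{j_n},
\end{equation*}
and the relations $\omega_i\wedge\omega_i=0$, $\omega_j\wedge\omega_i=-q\,\omega_i\wedge\omega_j$ ($i<j$)---which themselves follow from (\ref{relation a1})--(\ref{relation a4})---give $\Omega\wedge\omega_k=0$ for each $k\le n$, hence $\Omega\wedge\eta=0$ for any wedge $\eta$ of the $\omega_k$'s, and in particular $\Omega\wedge\Omega=0$. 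Reading off the coefficient of $x_1\wedge\cdots\wedge x_{2n}$ on the two sides already gives the case $r=0$.

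For general $r$ I would let $\Omega_r$ denote the $x_1\wedge\cdots\wedge x_r$-component of $\Omega$, so that $\Omega=x_1\wedge\cdots\wedge x_r\wedge\Omega_r+R_r$ with $R_r$ free of any monomial divisible by $x_1\cdots x_r$; since $1,\dots,r$ are the smallest indices no reordering is needed, and
\begin{equation*}
\Omega_r=\sum_{r<i_{r+1}<\cdots<i_n\le 2n}\xi^{1,\ldots,n}_{1,\ldots,r,i_{r+1},\ldots,i_n}\,x_{i_{r+1}}\wedge\cdots\wedge x_{i_n}.
\end{equation*}
The key lemma is $\Omega_r\wedge\Omega=0$. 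To see it, write $\omega_k=\alpha_k+\beta_k$ with $\alpha_k=\sum_{m\le r}a_{km}x_m$; collecting the $x_1\cdots x_r$-component of $\Omega=\bigwedge_k(\alpha_k+\beta_k)$, which forces exactly $r$ factors to contribute their $\alpha$-part, is precisely the quantum Laplace expansion of $\Omega$ along its first $r$ columns, $\Omega_r=\sum_{L}\pm q^{\,c_L}\,\xi^{L}_{1,\ldots,r}\bigwedge_{k\notin L}\beta_k$ for suitable integers $c_L$, the $r\times r$ minors $\xi^L_{1,\ldots,r}$ sitting on the left by (\ref{relation a1})--(\ref{relation a4}). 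Since $\beta_k=\omega_k-\alpha_k$ and $\omega_k\wedge\Omega=0$, in $\bigl(\bigwedge_{k\notin L}\beta_k\bigr)\wedge\Omega$ every surviving term is the ``all-$\alpha$'' one, so $\Omega_r\wedge\Omega$ becomes a combination of monomials $x_{m_1}\wedge\cdots\wedge x_{m_{n-r}}\wedge\Omega$ with all $m_t\le r$, and the coefficient of each such monomial, assembled over $L$, is a sum $\sum_{\pi}(-q)^{l(\pi)}a_{\pi(1),c_1}\cdots a_{\pi(n),c_n}$ over a column list $c_1,\dots,c_n$ having a repetition in $\{1,\dots,r\}$; such a sum vanishes, being the expansion of $\omega_{c_1}'\wedge\cdots\wedge\omega_{c_n}'=0$ for the column $1$-forms $\omega_c'=\sum_{i}a_{ic}x_i$ used in the second halves of the previous propositions. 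Hence $\Omega_r\wedge\Omega=0$, and as the coefficients of $\Omega_r$ sit on the left they pass freely through the comparison of coefficients.

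Finally, in $\Omega_r\wedge\Omega=0$ the monomials of multidegree exactly $x_{r+1}\wedge\cdots\wedge x_{2n}$ arise by pairing an $(n-r)$-subset $J=\{i_{r+1}<\cdots<i_n\}\subseteq\{r+1,\ldots,2n\}$ coming from $\Omega_r$ with its complement $K=\{i_{n+1}<\cdots<i_{2n}\}=\{r+1,\ldots,2n\}\setminus J$ coming from $\Omega$; reordering $x_J\wedge x_K$ into increasing order contributes exactly $(-q)^{l(\sigma)}$, because the inversions of $\sigma$ are exactly the misordered pairs in $J\times K$. This yields the first identity. The second one is the same computation with the multiplication reversed; equivalently, apply the anti-isomorphism $Mat_q(2n)\to Mat_{q^{-1}}(2n)$, $a_{ij}\mapsto a_{ij}$, which preserves the relations (\ref{relation a1})--(\ref{relation a4}) with $q$ replaced by $q^{-1}$, carries each quantum minor to the corresponding $q^{-1}$-minor, and fixes $l(\sigma)$, so that $(-q)^{l(\sigma)}\,\xi^{1,\ldots,n}_{i_1,\ldots,i_n}\xi^{1,\ldots,n}_{i_{n+1},\ldots,i_{2n}}$ turns into $(-q)^{-l(\sigma)}\,\xi^{1,\ldots,n}_{i_{n+1},\ldots,i_{2n}}\xi^{1,\ldots,n}_{i_1,\ldots,i_n}$. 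The step I expect to be the real work is the middle one---establishing the quantum Laplace expansion of $\Omega$ along the pinned columns with the scalar minors on the left, and checking that the over-pinned coefficients vanish---since that is where the relations (\ref{relation a1})--(\ref{relation a4}) do their job; everything else is bookkeeping with $(-q)$-signs.
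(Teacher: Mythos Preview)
Your approach is sound and reaches the same Pl\"ucker relations, but it is organized quite differently from the paper's proof and leans on more auxiliary machinery. The paper does not extract a component $\Omega_r$ at all: it introduces the truncated $1$-forms $\omega_{ir}=\sum_{k\le r}a_{ik}x_k$ and $\omega'_{ir}=\sum_{k>r}a_{ik}x_k$ (your $\alpha_i$ and $\beta_i$), records four commutation relations among $\omega_i,\omega_{ir},\omega'_{ir}$, and shows in one pass that $\omega_1\wedge\cdots\wedge\omega_n\wedge\omega'_{1r}\wedge\cdots\wedge\omega'_{nr}=0$ by splitting off $\omega_{nr}$ from $\omega_n$, pushing it to the far left, and observing that one is then wedging $n>r$ elements of the $r$-dimensional span of $x_1,\dots,x_r$. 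The coefficient of $x_1\wedge\cdots\wedge x_{2n}$ in that product is exactly your sum, so the first identity drops out without any Laplace expansion or repeated-column argument. For the second identity the paper simply repeats the computation with the factors in the opposite order, whereas you invoke the $q\leftrightarrow q^{-1}$ anti-isomorphism, which is a pleasant shortcut.

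One point in your sketch deserves tightening: the claim that ``every surviving term is the all-$\alpha$ one'' in $(\bigwedge_{k\notin L}\beta_k)\wedge\Omega$ is not a bare consequence of $\omega_k\wedge\Omega=0$, since a stray $\omega_k$ need not sit adjacent to $\Omega$. You must peel from the right and commute each newly produced $\alpha_{k_j}$ leftward through the remaining $\beta$'s via $\beta_i\wedge\alpha_j=(-q)\alpha_j\wedge\beta_i$ for $i<j$ --- precisely one of the relations the paper isolates --- which yields a uniform scalar $(-1)^{n-r}q^{\,2\binom{n-r}{2}}$ independent of $L$; this $L$-independence is what lets your ``assemble over $L$ into a repeated-column minor'' step go through cleanly. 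The paper's route sidesteps this bookkeeping by never decoupling $\xi^L_{1,\dots,r}$ from the rest; your route buys a more structural explanation of the vanishing (ultimately a determinant with two equal columns, via Proposition~2.2) and gets the second identity for free from the anti-automorphism.
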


\begin{proof}Let $\omega_{i}=\sum_{k=1}^{2n}a_{ik}x_k,\ \omega_{ir}=\sum_{k=1}^{r}a_{ik}x_k,\ \omega_{ir}'=\sum_{k=r+1}^{2n}a_{ik}x_k$,
it is easy to see that for any $i<j$
\begin{align*}
&\omega_{jr}\wedge\omega_{ir}=(-q)\omega_{ir}\wedge\omega_{jr},\\
&\omega_{jr}'\wedge\omega_{ir}'=(-q)\omega_{ir}'\wedge\omega_{jr}',\\
&\omega_{ir}'\wedge\omega_{jr}=(-q)\omega_{jr}\wedge\omega_{ir}',\\
&\omega_{i}\wedge\omega_{jr}=\omega_{jr}\wedge((-q)^{-1}\omega_{ir}+(-q)\omega_{ir}').
\end{align*}
Note that $r<n$, so we compute that
\begin{equation}\label{p11}
\begin{split}
&\omega_1\wedge\omega_2\wedge\cdots\wedge\omega_n\wedge\omega_{1r}'\wedge\cdots\wedge\omega_{nr}'\\
=&\omega_1\wedge\omega_2\wedge\cdots\wedge\omega_{n-1}\wedge(\omega_{nr}+\omega_{nr}')\wedge\omega_{1r}'\wedge\cdots\wedge\omega_{nr}'\\
=&\omega_1\wedge\omega_2\wedge\cdots\wedge\omega_{n-1}\wedge\omega_{nr}\wedge\omega_{1r}'\wedge\cdots\wedge\omega_{nr}'\\
=&\omega_{nr}\wedge((-q)^{-1}\omega_{1r}+(-q)\omega_{1r}')\wedge\cdots \\
& \qquad\qquad \wedge((-q)^{-1}\omega_{n-1,r}+(-q)\omega_{n-1,r}')\wedge\omega_{1r}'\wedge\cdots\wedge\omega_{nr}'\\
=&\omega_{nr}\wedge(-q)^{-1}\omega_{n-1,r}\wedge\cdots\wedge(-q)^{n-1}\omega_{1,r}\wedge\omega_1'\wedge\cdots\wedge\omega_n'\\
=&0,\\
\end{split}
\end{equation}
Similarly, we can get that
\begin{equation}\label{p12}
\begin{split}
\omega_{1r}'\wedge\cdots\wedge\omega_{nr}'\wedge\omega_1\wedge\omega_2\wedge\cdots\wedge\omega_n=0.\\
\end{split}
\end{equation}

On the other hand,
\begin{equation}\label{p13}
\begin{split}
&\omega_1\wedge\omega_2\wedge\cdots\wedge\omega_n\wedge\omega_{1r}'\wedge\cdots\wedge\omega_{nr}'\\
=&\sum_{i_{r+1}<\cdots<i_{n}\atop i_{n+1}<\cdots<i_{2n}}(-q)^{l(\sigma)}\xi^{1,\cdots,n}_{i_1,\cdots,i_n}\xi^{1,\cdots,n}_{i_{n+1},\cdots,i_{2n}}x_1\wedge x_2\wedge\cdots\wedge x_n,\\
\end{split}
\end{equation}
\begin{equation}\label{p14}
\begin{split}
&\omega_{1r}'\wedge\cdots\wedge\omega_{nr}'\wedge\omega_1\wedge\omega_2\wedge\cdots\wedge\omega_n\\
=&\sum_{i_{r+1}<\cdots<i_{n}\atop i_{n+1}<\cdots<i_{2n}}(-q)^{n^2-l(\sigma)}\xi^{1,\cdots,n}_{i_{n+1},\cdots,i_{2n}}\xi^{1,\cdots,n}_{i_1,\cdots,i_n}x_1\wedge x_2\wedge\cdots\wedge x_n.\\
\end{split}
\end{equation}

Comparing Eqs. (\ref{p11}), (\ref{p12}), (\ref{p13}) and (\ref{p14}), we obtain the proposition.
\end{proof}

\begin{corollary}\label{thp2}
Let $1=i_1<i_2<\cdots<i_n$ be n indices between $1$ and $2n$, and the remaining indices
 are $i_{n+1}<i_{n+2}<\cdots<i_{2n}$. One then has
\begin{equation}
\sum_{1=i_1<\cdots<i_n,i_{n+1}<\cdots<i_{2n}}(-q)^{l(\sigma)}
\xi^{1,\cdots,n}_{i_1,\cdots,i_n}\xi^{1,\cdots,n}_{i_{n+1},\cdots,i_{2n}}=0,
\end{equation}
\begin{equation}
\sum_{1=i_1<\cdots<i_n,i_{n+1}<\cdots<i_{2n}}(-q)^{-l(\sigma)}
\xi^{1,\cdots,n}_{i_{n+1},\cdots,i_{2n}}\xi^{1,\cdots,n}_{i_1,\cdots,i_n}=0,
\end{equation}
where
$\sigma=\begin{pmatrix}
1 &2    &\cdots  & 2n\\
i_1  &i_2  &\cdots  & i_{2n}\end{pmatrix}.$

Denote $\xi^{1,\cdots,n}_{i_1,\cdots,i_n}$ by $(i_1,i_2,\cdots,i_n)$, the results can be rewritten as
\begin{equation}\nonumber
\begin{split}
&\sum_{1=i_1<\cdots<i_n,i_{n+1}<\cdots<i_{2n}}(-q)^{l(\sigma)}(i_1,\cdots,i_n)(i_{n+1},\cdots,i_{2n})=0,\\
&\sum_{1=i_1<\cdots<i_n,i_{n+1}<\cdots<i_{2n}}(-q)^{-l(\sigma)}(i_{n+1},\cdots,i_{2n})(i_1,\cdots,i_n)=0.
\end{split}
\end{equation}

In particular, if $n=2$, we have
$$(1,2)(3,4)+(-q)(1,3)(2,4)+(-q)^2(1,4)(2,3)=0, $$
which is the simplest {\it Pl\"{u}cker\ relation}.
\end{corollary}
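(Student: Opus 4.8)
The plan is to recognize the Corollary as nothing more than the $r=1$ instance of Proposition \ref{thp1}, followed by an unwinding of notation and an explicit count in the case $n=2$. First I would observe that the standing hypotheses here --- indices $1 = i_1 < i_2 < \cdots < i_n$ chosen from $\{1,\dots,2n\}$ with complement $i_{n+1} < \cdots < i_{2n}$ --- are \emph{exactly} the hypotheses of Proposition \ref{thp1} with $r=1$: the condition $i_k=k$ for $1\le k\le r$ collapses to $i_1=1$, and the two monotonicity conditions on $i_{r+1}<\cdots<i_n$ and $i_{n+1}<\cdots<i_{2n}$ transcribe verbatim. Hence the two displayed identities with $(-q)^{\pm l(\sigma)}$ follow immediately from the two identities of Proposition \ref{thp1}. (If one prefers a self-contained argument, one simply repeats the proof of that proposition in this case: with $\omega_{i1}=a_{i1}x_1$ a scalar multiple of $x_1$, the wedge $\omega_{n,1}\wedge\omega_{n-1,1}\wedge\cdots\wedge\omega_{1,1}$ of $n\ge 2$ such one-forms vanishes because $x_1\wedge x_1=0$; this is precisely the place where $r<n$ enters.)

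Next, the reformulation in terms of the bracket symbol is purely notational: setting $(i_1,\dots,i_n):=\xi^{1,\dots,n}_{i_1,\dots,i_n}$ rewrites the two identities as the two displayed sums of products of Plücker coordinates. Finally, for the case $n=2$ I would enumerate the terms directly. Here $i_1=1$ is forced, $i_2$ ranges over $\{2,3,4\}$, and in each case the complementary pair $\{i_3<i_4\}$ is determined, so there are three summands. When $i_2=2$ one has $\sigma=\mathrm{id}$, $l(\sigma)=0$, contributing $(1,2)(3,4)$; when $i_2=3$, $\sigma$ is the transposition $(2\,3)$, $l(\sigma)=1$, contributing $(-q)(1,3)(2,4)$; when $i_2=4$, $\sigma=\begin{pmatrix}1&2&3&4\\1&4&2&3\end{pmatrix}$ has two inversions, $l(\sigma)=2$, contributing $(-q)^2(1,4)(2,3)$. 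Adding these gives $(1,2)(3,4)+(-q)(1,3)(2,4)+(-q)^2(1,4)(2,3)=0$, the asserted Plücker relation.

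In short, the Corollary demands no new idea beyond Proposition \ref{thp1}; the only steps requiring any care are the bookkeeping of the three permutation lengths in the $n=2$ display (and, if one reproves rather than cites, noting that the hypothesis $r<n$ is exactly what forces the relevant wedge to be degenerate). I anticipate no genuine obstacle.
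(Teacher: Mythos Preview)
Your proposal is correct and matches the paper's intent: the corollary is stated without proof precisely because it is the $r=1$ case of Proposition~\ref{thp1}, and your unwinding of the $n=2$ example is accurate. There is nothing to add.
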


\begin{proposition}\label{thp3}
Let $\sigma=\begin{pmatrix}
1  &2     &\cdots &2n\\
i_1 & i_2  &\cdots &i_{2n}\end{pmatrix}
$ be a permutation in $S_{2n}$ such
that $i_k=k$ for $1\leq k\leq r$, $i_{r+1}<i_{r+2}<\cdots<i_{n}$  and  $i_{n+1}<i_{n+2}<\cdots<i_{2n}$. Then
\begin{equation*}
\begin{split}
&\sum_{i_{r+1}<\cdots<i_{n},i_{n+1}<\cdots<i_{2n}}(-q)^{l(\sigma)}
\xi^{n+1,\cdots,2n}_{i_1,\cdots,i_n}\xi^{1,\cdots,n}_{i_{n+1},\cdots,i_{2n}}\\
=&(-q)^{(n^2-2nr)}
\sum_{i_{r+1}<\cdots<i_{n},i_{n+1}<\cdots<i_{2n}}(-q)^{n^2-l(\sigma)}
\xi^{1,\cdots,n}_{i_{n+1},\cdots,i_{2n}}\xi^{n+1,\cdots,2n}_{i_1,\cdots,i_n}.
\end{split}
\end{equation*}
\end{proposition}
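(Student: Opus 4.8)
The plan is to follow the exterior-algebra method of Proposition~\ref{thp1}, but with a new pair of families of $1$-forms, comparing two wedge products that are now generally nonzero rather than both zero. Work in the quantum exterior algebra $\Lambda_{2n}$ on $x_1,\ldots,x_{2n}$, and for $i=1,\ldots,n$ introduce the $1$-forms coming from the last $n$ rows of $A$,
\[
\eta_i=\sum_{k=1}^{2n}a_{n+i,k}x_k,\qquad
\eta_{ir}=\sum_{k=1}^{r}a_{n+i,k}x_k,\qquad
\eta_{ir}'=\sum_{k=r+1}^{2n}a_{n+i,k}x_k ,
\]
so $\eta_i=\eta_{ir}+\eta_{ir}'$, together with $\omega_{ir}'=\sum_{k=r+1}^{2n}a_{ik}x_k$ as in the proof of Proposition~\ref{thp1}. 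Because $j\le n<n+i$, relations (\ref{relation a1})--(\ref{relation a4}) yield, by the same computation as in that proof,
\[
\eta_{ir}'\wedge\omega_{jr}'=(-q)\,\omega_{jr}'\wedge\eta_{ir}',\qquad
\eta_{ir}\wedge\omega_{jr}'=(-q)^{-1}\,\omega_{jr}'\wedge\eta_{ir}\qquad(1\le i,j\le n),
\]
and hence $\eta_i\wedge\omega_{jr}'=\omega_{jr}'\wedge\big((-q)^{-1}\eta_{ir}+(-q)\,\eta_{ir}'\big)$.

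Put $\Omega=\omega_{1r}'\wedge\cdots\wedge\omega_{nr}'$ and $W=\eta_1\wedge\cdots\wedge\eta_n\wedge\Omega$. Since $\Omega$ is a wedge of $n$ one-forms all supported on $x_{r+1},\ldots,x_{2n}$, iterating the last identity $n$ times gives $\eta_i\wedge\Omega=\Omega\wedge\big((-q)^{-n}\eta_{ir}+(-q)^{n}\eta_{ir}'\big)$ for each $i$. Moving $\Omega$ to the front of $W$ past $\eta_n,\eta_{n-1},\ldots,\eta_1$ in turn therefore yields
\[
W=\Omega\wedge\bigwedge_{i=1}^{n}\Big((-q)^{-n}\eta_{ir}+(-q)^{n}\eta_{ir}'\Big).
\]
Now expand this wedge. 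Each $\eta_{ir}$ lies in the span of $x_1,\ldots,x_r$, while each $\eta_{ir}'$ and each $\omega_{jr}'$ lies in the span of $x_{r+1},\ldots,x_{2n}$; hence a term survives only when exactly $r$ of the $n$ factors contribute their ``low'' part $\eta_{ir}$ and the other $n-r$ contribute $\eta_{ir}'$ (otherwise some $x_k$ with $k\le r$ is absent, or some $x_k$ is repeated). Every such term carries the \emph{same} scalar $(-q)^{-nr}(-q)^{n(n-r)}=(-q)^{n^2-2nr}$, and the accompanying wedge is precisely the corresponding surviving term of $\Omega\wedge\eta_1\wedge\cdots\wedge\eta_n$. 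Therefore $W=(-q)^{n^2-2nr}\,W'$, where $W':=\Omega\wedge\eta_1\wedge\cdots\wedge\eta_n$.

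It remains to expand $W$ and $W'$ in quantum minors. As in the Laplace computations of this section, $\eta_1\wedge\cdots\wedge\eta_n=\sum_{j_1<\cdots<j_n}\xi^{n+1,\cdots,2n}_{j_1,\cdots,j_n}x_{j_1}\wedge\cdots\wedge x_{j_n}$ and $\Omega=\sum_{r<j_1<\cdots<j_n}\xi^{1,\cdots,n}_{j_1,\cdots,j_n}x_{j_1}\wedge\cdots\wedge x_{j_n}$. In a nonzero term of $W$ the column set of the first factor must exhaust $\{1,\ldots,r\}$, so it has the form $\{i_1,\ldots,i_n\}$ with $i_k=k$ for $k\le r$ and $i_{r+1}<\cdots<i_n$, while the second factor has column set $\{i_{n+1}<\cdots<i_{2n}\}$; reordering $x_{i_1}\wedge\cdots\wedge x_{i_{2n}}$ into $x_1\wedge\cdots\wedge x_{2n}$ contributes $(-q)^{l(\sigma)}$, so that
\[
W=\Big(\sum_{i_{r+1}<\cdots<i_n,\,i_{n+1}<\cdots<i_{2n}}(-q)^{l(\sigma)}\,\xi^{n+1,\cdots,2n}_{i_1,\cdots,i_n}\,\xi^{1,\cdots,n}_{i_{n+1},\cdots,i_{2n}}\Big)x_1\wedge\cdots\wedge x_{2n}.
\]
The identical computation for $W'$ only replaces $\sigma$ by $\sigma_1$, so that $l(\sigma)$ becomes $n^2-l(\sigma)$ and the two minors appear in the opposite order; comparing the coefficients of $x_1\wedge\cdots\wedge x_{2n}$ in $W=(-q)^{n^2-2nr}W'$ is exactly the assertion. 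The step needing the most care is the middle one: one must check both that pushing $\Omega$ past one row contributes a clean $(-q)^{\pm n}$ and that the constraint ``exactly $r$ low contributions'' is forced, which is precisely what makes the scalar $(-q)^{n^2-2nr}$ uniform over all surviving terms; the rest simply reruns Proposition~\ref{thp1} and Corollary~\ref{thp2}.
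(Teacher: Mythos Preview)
Your proof is correct and essentially identical to the paper's. You rename $\omega_{n+i}$ to $\eta_i$, derive the same commutation relation $\eta_i\wedge\Omega=\Omega\wedge((-q)^{-n}\eta_{ir}+(-q)^{n}\eta_{ir}')$, and then pass $\Omega$ through $\eta_1\wedge\cdots\wedge\eta_n$; the paper does precisely this with the $\omega$'s. The only real difference is presentational: the paper factors out $(-q)^{n^2}$ and then jumps to the result, while you spell out the ``exactly $r$ low contributions'' counting argument that produces the uniform scalar $(-q)^{n^2-2nr}$, which is in fact the step the paper leaves implicit.
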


\begin{proof}
It is clear that for any $i<j$
$$\omega_j\wedge\omega_{ir}'=\omega_{ir}'\wedge((-q)^{-1}\omega_{jr}+(-q)\omega_{jr}').$$
So we have
\begin{equation}\label{p21}
\begin{split}
&\omega_{n+1}\wedge \omega_{n+2}\wedge\cdots\wedge \omega_{2n}\wedge \omega_{1r}'\wedge \omega_{2r}'\wedge\cdots\wedge \omega_{nr}'\\
=&\omega_{1r}'\wedge \omega_{2r}'\wedge\cdots\wedge \omega_{nr}'\wedge((-q)^{-n}\omega_{n+1,r}+(-q)^{n}\omega_{n+1,r}')\\
&\qquad \qquad \wedge\cdots\wedge((-q)^{-n}\omega_{2n,r}+(-q)^{n}\omega_{2n,r}')\\
=&(-q)^{n^2}\omega_{1r}'\wedge \omega_{2r}'\wedge\cdots\wedge \omega_{nr}'\wedge((-q)^{-2n}\omega_{n+1,r}+\omega_{n+1,r}')\wedge\cdots \\ &\qquad\qquad \wedge((-q)^{-2n}\omega_{2n,r}+\omega_{2n,r}')\\
=&(-q)^{n^2-2nr}\omega_{1r}'\wedge \omega_{2r}'\wedge\cdots\wedge \omega_{nr}'\wedge \omega_{n+1}\wedge\cdots\wedge \omega_{2n}.
\end{split}
\end{equation}
On the other hand, we can expand the wedge product of $\omega's$ as follows.
\begin{equation}\label{p22}
\begin{split}
&\omega_{n+1}\wedge \omega_{n+2}\wedge\cdots\wedge \omega_{2n}\wedge \omega_1'\wedge \omega_2'\wedge\cdots\wedge \omega_n'\\
=&\sum_{1=i_1<i_2<\cdots<i_n}(-q)^{l(\sigma)}
\xi^{n+1,\cdots,2n}_{i_1,\cdots,i_n}\xi^{1,\cdots,n}_{i_{n+1},\cdots,i_{2n}}
x_1\wedge x_2\wedge\cdots\wedge x_{2n},\\
\end{split}
\end{equation}

\begin{equation}\label{p23}
\begin{split}
&\omega_1'\wedge \omega_2'\wedge\cdots\wedge \omega_n'\wedge \omega_{n+1}\wedge\cdots\wedge \omega_{2n}\\
=&\sum_{1=i_1<i_2<\cdots<i_n}(-q)^{n^2-l(\sigma)}
\xi^{1,\cdots,n}_{i_{n+1},\cdots,i_{2n}}\xi^{n+1,\cdots,2n}_{i_1,\cdots,i_n}
x_1\wedge x_2\wedge\cdots\wedge x_{2n}.\\
\end{split}
\end{equation}

Comparing the equations (\ref{p21}), (\ref{p22}) and (\ref{p23}), we obtain the proposition.
\end{proof}

\begin{corollary}\label{thp4}
Let $1=i_1<i_2<\cdots<i_n$ be n indices between $1$ and $2n$, and the remaining
 indices are $i_{n+1}<i_{n+2}<\cdots<i_{2n}$. Then one has
\begin{equation*}
\begin{split}
&\sum_{1=i_1<i_2<\cdots<i_{n},i_{n+1}<\cdots<i_{2n}}(-q)^{l(\sigma)}
\xi^{n+1,\cdots,2n}_{i_1,\cdots,i_n}\xi^{1,\cdots,n}_{i_{n+1},\cdots,i_{2n}}\\
=&(-q)^{(n^2-2n)}
\sum_{1=i_1<i_{2}<\cdots<i_{n},i_{n+1}<\cdots<i_{2n}}(-q)^{n^2-l(\sigma)}
\xi^{1,\cdots,n}_{i_{n+1},\cdots,i_{2n}}\xi^{n+1,\cdots,2n}_{i_1,\cdots,i_n},\\
\end{split}
\end{equation*}
where
$\sigma=\begin{pmatrix}
1 &2    &\cdots  & 2n\\
i_1  &i_2  &\cdots  & i_{2n}\end{pmatrix}.
$

In particular, if $n=2$,
\begin{equation*}
\sum_{1=i_1<i_2,i_3<i_4}(-q)^{l(\sigma)}
\xi^{3,4}_{i_1,i_2}\xi^{1,2}_{i_{3},\cdots,i_{4}}
=\sum_{1=i_1<i_2,i_3<i_4}(-q)^{n^2-l(\sigma)}
\xi^{1,2}_{i_1,i_2}\xi^{3,4}_{i_{3},,i_{4}}.
\end{equation*}
\end{corollary}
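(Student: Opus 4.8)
The plan is to read off Corollary~\ref{thp4} directly from Proposition~\ref{thp3} by specializing the parameter $r$ to $1$, exactly as Corollary~\ref{thp2} was obtained from Proposition~\ref{thp1}. In Proposition~\ref{thp3} one requires $i_k=k$ for $1\le k\le r$; taking $r=1$ this amounts to $i_1=1$, which is precisely the hypothesis of the corollary, and the indices left free, namely $i_2<\cdots<i_n$ together with $i_{n+1}<\cdots<i_{2n}$, are exactly those summed over in the corollary. The prefactor $(-q)^{n^2-2nr}$ on the right-hand side of Proposition~\ref{thp3} becomes $(-q)^{n^2-2n}$ at $r=1$, so the two displayed identities coincide term by term. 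Consequently, once Proposition~\ref{thp3} is in hand, the proof is the one-line statement ``set $r=1$ in Proposition~\ref{thp3}'', and there is no obstacle beyond the manipulations of the $1$-forms already carried out in proving Proposition~\ref{thp3}.

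The only points worth a second's attention are purely bookkeeping: that $r=1$ is an admissible choice, which holds for all $n\ge 2$ since then $r<n$ and the chain $i_{r+1}<\cdots<i_n$ is nonempty; and that the exponent $n^2-2nr$ is actually evaluated at $r=1$ rather than left symbolic. For the degenerate value $n=1$ the identity collapses to $a_{21}a_{12}=a_{12}a_{21}$, which is an instance of \eqref{relation a3}, so nothing is lost there either.

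Finally, to make the statement concrete I would write out the case $n=2$, as displayed in the corollary: the admissible index data are $(i_1,i_2;i_3,i_4)=(1,2;3,4),\,(1,3;2,4),\,(1,4;2,3)$ with $l(\sigma)=0,1,2$, and since $n^2-2n=0$ the asserted relation becomes the equality of $\sum_{\mathrm{choices}}(-q)^{l(\sigma)}\,\xi^{3,4}_{i_1,i_2}\,\xi^{1,2}_{i_3,i_4}$ with $\sum_{\mathrm{choices}}(-q)^{4-l(\sigma)}\,\xi^{1,2}_{i_3,i_4}\,\xi^{3,4}_{i_1,i_2}$; this can be verified term by term against the defining relations \eqref{relation a1}--\eqref{relation a4}, but it only serves as a sanity check and is not part of the proof. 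In short, the entire content of the corollary is contained in Proposition~\ref{thp3}, and there is no real difficulty to overcome.
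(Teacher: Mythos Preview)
Your proposal is correct and matches the paper's approach exactly: the paper gives no separate proof of Corollary~\ref{thp4}, treating it as the immediate specialization $r=1$ of Proposition~\ref{thp3}, just as you describe. Your bookkeeping checks on the exponent $n^2-2nr\to n^2-2n$ and on the admissibility of $r=1$ are accurate, and the $n=2$ illustration is a fine sanity check.
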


\begin{remark}\label{remark1} Due to the symmetry of the relations in the algebra $A=M_q(2n)$
Proposition (\ref{thp1}), Corollary (\ref{thp2}), Proposition (\ref{thp3}) and Corollary (\ref{thp4})
also hold if we replace $\xi^I_J$ by $\xi^J_I$. Moreover,
if we substitute $1,2,\cdots,2n$ by $j_1<j_2<\cdots<j_{2n}$, similar identities also hold.
\end{remark}

\subsection{$q$-Pfaffians.} Let $B$ be the algebra generated by
$b_{ij}$ for $1\leq i<j\leq n$ modulo the ideal generated by the
relations
\begin{equation}\label{reb}
b_{ij}b_{kl}+(-q)b_{ik}b_{jl}+(-q)^{2}b_{il}b_{jk}
=b_{kl}b_{ij}+(-q)^{-1}b_{jl}b_{ik}+(-q)^{-2}b_{jk}b_{il},
\end{equation}
where $i<j<k<l$.

We define the $q$-Pfaffian\ $[1,2,\cdots ,2n]$ inductively as follows. For $i<j$, let $[i,j]=b_{ij}$.
Then the $n$th order $q$-Pfaffian is inductively defined to be
$$\mbox{Pf}_q=[1,2,\cdots ,2n]=\sum_{j=2}^{2n}(-q)^{j-2}[1,j][2,3,\cdots,\hat{j},\cdots,2n].$$

To obtain an explicit formula for $\mbox{Pf}_q$, we consider some subsets of permutations.
Let $N=\{1,2,\cdots,2n\}$, $S$ be a subset of $N$ and $|S|=2p$, we define two sets of partitions of
$S$ into unordered (resp. unordered) pairs respectively:
\begin{align*}
&\Pi(S)=\{(i_1,j_1), (i_2,j_2), \cdots, (i_{p},j_{p});i_k<j_k,\ i_k<i_{k+1},\ and\ i_k,j_k\in S\},\\
&\Pi(S)'=\{(i_1,j_1), (i_2,j_2), \cdots, (i_{p},j_{p});i_k<j_k\,\ and\ i_k,j_k\in S\}.
\end{align*}
Note that $\Pi(S)\subset \Pi(S)'$, and we also denote $\Pi=\Pi(N)$, $\Pi'=\Pi(N)'$.

We can associate the element of $\Pi(S)'$ with the
subset of the symmetric group $S_{2n}$ in the following manner
$$\sigma=
\begin{pmatrix}
1  &2   &3    &4   &\cdots &2p-1 &2p\\
i_1 &j_1 & i_2 &j_2 &\cdots &i_p &j_p
\end{pmatrix}\in S_{2p}
$$
for $\pi=\{(i_1,j_1), (i_2,j_2), \cdots, (i_p,j_p)\}\in\Pi(S)'$.  We define
$l(\pi)=l(\sigma)$, and denote
$b_\pi=b_{i_1,j_1}b_{i_2,j_2}\cdots b_{i_p,j_p}$.

Let $\pi={\{(i_1,j_1), \cdots, (i_n,j_n)\}}\in\Pi'$, $\pi_1=\{(i_1,j_1), \cdots(i_p,j_p)\}\in\Pi(S)'$,
$\pi_2=\{(i_{p+1},j_{p+1}), \cdots, (i_n,j_n)\}\in\Pi(N\backslash S)'$,
it is clear that $$l(\pi)=l(\pi_1)+l(\pi_2)+\sum_{i\in S}i-p(2p+1).$$

Here is an expression of the quantum Pfaffian that works for any $q$.
\begin{theorem}\label{T:Ph1} One has that
 $$[1,2,\cdots ,2n]=\sum_{\pi\in\Pi}(-q)^{l(\pi)}b_\pi=\sum_{\pi \in\Pi}(-q)^{l(\pi)}[i_1,j_1][i_2,j_2]\cdots[i_n,j_n].$$
\end{theorem}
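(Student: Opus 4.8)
The plan is to argue by induction on $n$, feeding off the recursive definition of $[1,2,\dots,2n]$ directly rather than any of the Pl\"ucker material. The base case $n=1$ is immediate: $\Pi(\{1,2\})$ consists of the single pair $\{(1,2)\}$ with $l=0$, so both sides reduce to $b_{12}=[1,2]$. Before the inductive step I would isolate the combinatorial bijection hidden in the recursion: any perfect matching $\pi\in\Pi(N)$ with $N=\{1,\dots,2n\}$ pairs the minimal element $1$ with a unique $j\in\{2,\dots,2n\}$, and deleting the pair $(1,j)$ leaves a perfect matching $\pi'\in\Pi\bigl(\{2,\dots,\hat j,\dots,2n\}\bigr)$; conversely each choice of $j$ together with such a $\pi'$ reassembles into a unique $\pi$. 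Because $1$ is the global minimum, in the canonical (increasing-$i_k$) ordering of $\pi$ the pair $(1,j)$ is listed first and the remaining pairs are exactly the canonical ordering of $\pi'$; hence $b_\pi=b_{1j}\,b_{\pi'}=[1,j]\,b_{\pi'}$.

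The heart of the argument is the length bookkeeping, namely the identity $l(\pi)=(j-2)+l(\pi')$. I would prove this by writing $\sigma$ as the word $1,j,i_2,j_2,\dots,i_n,j_n$ read off from the canonical ordering, and splitting its inversions into three groups: the inversions internal to the tail $i_2,j_2,\dots,i_n,j_n$, which number $l(\pi')$ since an inversion count depends only on relative order and so is unchanged when $\{2,\dots,\hat j,\dots,2n\}$ is relabelled order-preservingly onto $\{1,\dots,2n-2\}$; the inversions created by the leading letter $1$, of which there are none; and the inversions created by the letter $j$ against the tail, which are precisely the tail-entries lying in $\{2,3,\dots,j-1\}$, i.e.\ $j-2$ of them (and the pair of positions holding $1$ and $j$ is not itself an inversion). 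Combining this with $b_\pi=[1,j]\,b_{\pi'}$ gives $(-q)^{l(\pi)}b_\pi=(-q)^{j-2}[1,j]\,(-q)^{l(\pi')}b_{\pi'}$.

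It then remains to sum. Summing over all $\pi\in\Pi$, grouping by the partner $j$ of $1$ and summing over $\pi'\in\Pi(\{2,\dots,\hat j,\dots,2n\})$ first, one obtains $\sum_{j=2}^{2n}(-q)^{j-2}[1,j]\sum_{\pi'}(-q)^{l(\pi')}b_{\pi'}$. By the induction hypothesis, applied to the $2(n-1)$-element index set $\{2,\dots,\hat j,\dots,2n\}$ --- legitimate because both sides of the claimed identity, as well as the recursive definition of the Pfaffian, are manifestly preserved under order-preserving relabelling of the index set --- the inner sum equals $[2,3,\dots,\hat j,\dots,2n]$. What is left is exactly $\sum_{j=2}^{2n}(-q)^{j-2}[1,j][2,3,\dots,\hat j,\dots,2n]=[1,2,\dots,2n]$, the defining recursion, which closes the induction. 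The second displayed formula in the theorem is merely the first one with $b_{i_k j_k}$ rewritten as $[i_k,j_k]$, so it needs no separate treatment.

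I expect the only genuine subtlety to be the length computation together with the bookkeeping of what $l(\pi)$ means for a matching whose underlying index set is not $\{1,\dots,2m\}$: one must commit to reading $l(\pi)$ as the number of inversions of the canonical word, observe its invariance under order isomorphisms, and only then is the three-way split of inversions routine. Everything else is formal and does not even invoke the relations \eqref{reb}.
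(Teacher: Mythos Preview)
Your proposal is correct and follows essentially the same approach as the paper: induction on $n$ via the recursive definition, using the bijection that splits off the pair containing $1$ together with the length identity $l(\pi)=(j-2)+l(\pi')$. The paper's proof is terser---it simply writes the chain of equalities and relies on the length formula $l(\pi)=l(\pi_1)+l(\pi_2)+\sum_{i\in S}i-p(2p+1)$ recorded just before the theorem---whereas you spell out the inversion count and the relabelling invariance explicitly, but the underlying argument is the same.
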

\begin{proof}
We use induction. The initial step $n=1$ is trivial. Suppose the formula is true for $n-1$, then
\begin{align*}
&\sum_{j=2}^{2n}(-q)^{j-2}b_{1,j}[2,3,\cdots,\hat{j},\cdots,2n]\\
=&\sum_{j=2}^{2n}(-q)^{j-2}b_{1,j}\sum_{\pi\in\Pi(N\backslash\{1,j\})}(-q)^{l(\pi)}b_{\pi}\\
=&\sum_{j=2}^{2n}\sum_{\pi\in\Pi(N\backslash\{1,j\})}(-q)^{l(\pi)+j-2}b_{1,j}b_{\pi}\\
=&\sum_{\pi \in \Pi}(-q)^{l(\pi)}b_{\pi}\\
=&\sum_{\pi \in \Pi}(-q)^{l(\pi)}[i_1,j_1][i_2,j_2]\cdots[i_n,j_n].
\end{align*}
\end{proof}

\begin{lemma}\label{L:Ph2a} We have that
$$\sum_{i<j}(-q)^{i+j-3}[i,j][1,2,\cdots,\hat{i},\cdots,\hat{j},\cdots,2n]=
(\sum_{i=0}^{n-1}q^{4i})[1,2,\cdots ,2n].$$
\end{lemma}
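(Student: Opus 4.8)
The plan is to argue by induction on $n$, with all statements read for an arbitrary totally ordered ground set of even size (so that they may be applied to subsets). The base case $n=1$ is immediate: the left-hand side is $(-q)^{0}[1,2]=b_{12}=[1,2,\ldots,2n]$ and the right-hand side is $q^{0}[1,2]$. For the inductive step put $N=\{1,2,\ldots,2n\}$, write $\mathrm{Pf}_q=[1,2,\ldots,2n]$, and split the left-hand side $L$ according to whether the smaller index of the pair equals $1$:
$$L=\sum_{j=2}^{2n}(-q)^{j-2}[1,j]\,[N\setminus\{1,j\}]\;+\;\sum_{2\le i<j\le 2n}(-q)^{i+j-3}[i,j]\,[N\setminus\{i,j\}].$$
By the defining recursion for $\mathrm{Pf}_q$ the first sum is exactly $\mathrm{Pf}_q$, so it suffices to show that the second sum, call it $M$, equals $\bigl(q^{4}+q^{8}+\cdots+q^{4(n-1)}\bigr)\mathrm{Pf}_q$.

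In every term of $M$ the index $1$ sits inside the sub-Pfaffian, so I first expand $[N\setminus\{i,j\}]$ along its smallest element $1$ using the defining recursion, and then expand each resulting smaller Pfaffian $[N\setminus\{1,i,j,k\}]$ by Theorem \ref{T:Ph1} (valid verbatim for any finite totally ordered index set). This writes $M$ as a sum of monomials $b_{ij}b_{1k}b_{\rho}$ indexed by quadruples $\{1,i,j,k\}$ and matchings $\rho$ of the complement. The key step is to regroup by the underlying four-element set $\{1<x<y<z\}$ together with the matching $\rho$ of $N\setminus\{1,x,y,z\}$: for fixed such data the three contributions correspond to $(i,j,k)=(y,z,x),\,(x,z,y),\,(x,y,z)$, and a direct count of the powers of $-q$ produced by the two expansion steps yields coefficients $(-q)^{x+y+z-5}$, $(-q)^{x+y+z-6}$, $(-q)^{x+y+z-7}$ attached to $b_{yz}b_{1x}$, $b_{xz}b_{1y}$, $b_{xy}b_{1z}$ respectively. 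Hence the three terms assemble (times $(-q)^{l(\rho)}b_\rho$) into
$$(-q)^{x+y+z-5}\bigl(b_{yz}b_{1x}+(-q)^{-1}b_{xz}b_{1y}+(-q)^{-2}b_{xy}b_{1z}\bigr),$$
which is exactly $(-q)^{x+y+z-5}$ times the right-hand side of the defining relation (\ref{reb}) for the quadruple $(1,x,y,z)$. Using (\ref{reb}) to replace it by the left-hand side turns it into $(-q)^{x+y+z-5}\bigl(b_{1x}b_{yz}+(-q)b_{1y}b_{xz}+(-q)^{2}b_{1z}b_{xy}\bigr)$, i.e. into monomials in which $1$ is paired first.

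Finally I recollect these ``$1$-first'' monomials by the pair $(1,j)$ containing $1$ and the remaining pair $(a,b)$; each pair $(a,b)\subset N\setminus\{1,j\}$ arises exactly once — from the $b_{1x}$-, $b_{1y}$-, or $b_{1z}$-term according to whether $j<a<b$, $a<j<b$, or $a<b<j$ — so that $M=\sum_{j=2}^{2n}(-q)^{j-2}b_{1j}\,C_j$ for certain $C_j\in B$. Extracting the factor $(-q)^{j-2}b_{1j}$ and comparing the residual exponent of $-q$ on each monomial $b_{ab}b_\rho$ with $\mathrm{rk}_{S}(a)+\mathrm{rk}_{S}(b)-3+l(\rho)$, where $S=N\setminus\{1,j\}$, one checks that it is uniformly larger by $4$; hence $C_j$ equals $q^{4}$ times the left-hand side of the lemma applied to the $2(n-1)$-element set $N\setminus\{1,j\}$. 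By the induction hypothesis $C_j=q^{4}\bigl(\sum_{i=0}^{n-2}q^{4i}\bigr)[N\setminus\{1,j\}]$, and feeding this back while using the definition of $\mathrm{Pf}_q$ gives $M=\bigl(\sum_{i=1}^{n-1}q^{4i}\bigr)\mathrm{Pf}_q$, whence $L=\bigl(\sum_{i=0}^{n-1}q^{4i}\bigr)\mathrm{Pf}_q$. The one genuinely delicate step is the bookkeeping of the powers of $-q$: first that the three terms over each quadruple fit into exactly the combination occurring in (\ref{reb}), and then that after pulling out $b_{1j}$ the residual exponent is uniformly $4$ above the one prescribed by the lemma for $N\setminus\{1,j\}$; everything else is just the defining recursion for $[\,\cdot\,]$, Theorem \ref{T:Ph1}, and relation (\ref{reb}).
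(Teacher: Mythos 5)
Your proof is correct and follows essentially the same route as the paper's: split off the $i=1$ terms (which give $\mathrm{Pf}_q$ by definition), expand the remaining sub-Pfaffians along the index $1$, regroup by quadruples $\{1,x,y,z\}$ so that relation (\ref{reb}) converts the products into ``$1$-first'' form at the cost of a factor $(-q)^4$, and close with the induction hypothesis. The only (cosmetic) difference is that you start the induction at $n=1$, which has the mild advantage that the $n=2$ case — which genuinely requires (\ref{reb}) and is declared obvious in the paper — is absorbed into your inductive step.
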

\begin{proof}
We prove the statement by induction on $n$.
when $n=2$ it is obviously true. Suppose it is true for $n-1$, then

\begin{align*}
&\sum_{i<j}(-q)^{i+j-3}[i,j][1,2,\cdots,\hat{i},\cdots,\hat{j},\cdots,2n]\\
=&\sum_{1<j}(-q)^{j-2}[1,j][2,3,\cdots,\hat{j},\cdots,2n]\\
+&\sum_{1<i<j}(-q)^{i+j-3}[i,j][1,2,\cdots,\hat{i},\cdots,\hat{j},\cdots,2n].\\
\end{align*}

It follows from definition that
\begin{equation}
\begin{split}
&\sum_{1<j}(-q)^{j-2}[1,j][2,3,\cdots,\hat{j},\cdots,2n]=[1,2,\cdots ,2n].\\
\end{split}
\end{equation}

As for the second summand we have that
\begin{align*}
&\sum_{1<i<j}(-q)^{i+j-3}[i,j][1,2,\cdots,\hat{i},\cdots,\hat{j},\cdots,2n]\\
=&\sum_{1<i<j}\sum_{k\neq 1,i,j}(-q)^{i+j+k-9+l((i,j)(1,k))}b_{i,j}b_{1,k}\sum_{\pi\in N\backslash\{1,i,j,k\}}(-q)^{l(\pi)}b_{\pi}\\
=&\sum_{1<i<j<k}(-q)^{i+j+k-9}\sum_{\pi_1\in\{1,i,j,k\}}(-q)^{4-l(\pi_1)}b_{i_2,j_2}b_{i_1,j_1}\sum_{\pi_2\in N\backslash\{1,i,j,k\}}(-q)^{l(\pi_2)}b_{\pi_2}\\
=&\sum_{1<i<j<k}(-q)^{i+j+k-9}\sum_{\pi_1\in\{1,i,j,k\}}(-q)^{4+l(\pi_1)}b_{i_1,j_1}b_{i_2,j_2}\sum_{\pi_2\in N\backslash\{1,i,j,k\}}(-q)^{l(\pi_2)}b_{\pi_2}\\
=&(-q)^4\sum_{1<i<j<k}\sum_{\pi_1\in\{1,i,j,k\}}\sum_{\pi_2\in N\backslash\{1,i,j,k\}}(-q)^{l(\pi_1\pi_2)}b_{\pi_1}b_{\pi_2}\\
=&(-q)^4\sum_{i=2}^{2n}(-q)^{i-2}b_{1i}\sum_{j<k,j\neq i,k\neq i}\sum_{\pi\in N\backslash\{1,i,j,k\}}(-q)^{l((j,k)\pi)}b_{jk}b_{\pi}\\
=&(-q)^4\sum_{i=2}^{2n}(-q)^{i-2}[1,i](\sum_{i=0}^{n-2}(-q)^{4i})[2,3,\cdots,\hat{i},\cdots,2n]\\
=&(\sum_{i=1}^{n-1}(-q)^{4i})[1,2,\cdots ,2n].
\end{align*}
Therefore we have
$$\sum_{i<j}(-q)^{i+j-3}[i,j][1,2,\cdots,\hat{i},\cdots,\hat{j},\cdots,2n]=(\sum_{i=0}^{n-1}(-q)^{4i})[1,2,\cdots ,2n].$$
\end{proof}

\begin{theorem}\label{pf1} The quantum Pfaffian can be computed by
$$\sum_{\pi\in \Pi'}(-q)^{l(\pi)}b_{\pi}
=\sum_{\pi\in \Pi'}(-q)^{l(\pi)}[i_1,j_1][i_2,j_2]\cdots[i_n,j_n]
={\sum_{\sigma \in S_n} q^{4l(\sigma)}}[1,2,\cdots ,2n].$$
\end{theorem}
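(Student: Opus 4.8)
The plan is to prove the identity by induction on $n$, peeling off the first pair of each partition in $\Pi'$ and feeding the remainder into Lemma~\ref{L:Ph2a}. The base case $n=1$ is immediate, since then $\Pi'=\Pi=\{(1,2)\}$ and both sides equal $[1,2]$.

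For the inductive step I would first record the key length identity. Writing a general $\pi\in\Pi'$ as $(i_1,j_1)\pi'$ with $i_1<j_1$ and $\pi'\in\Pi(N\setminus\{i_1,j_1\})'$, and counting the inversions of the associated permutation in $S_{2n}$, the inversions involving one of the first two positions number exactly $(i_1-1)+(j_1-2)=i_1+j_1-3$ (there are $i_1-1$ values smaller than $i_1$, and among the later positions $j_1-2$ values smaller than $j_1$), while the inversions among the later positions are precisely those of $\pi'$ after the order-preserving relabeling of $N\setminus\{i_1,j_1\}$. Hence $l(\pi)=(i_1+j_1-3)+l(\pi')$; this is exactly the exponent that appears in Lemma~\ref{L:Ph2a}, which is what makes the two statements fit together.

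Grouping $\sum_{\pi\in\Pi'}(-q)^{l(\pi)}b_\pi$ by the first pair and using the length formula gives
$$\sum_{\pi\in\Pi'}(-q)^{l(\pi)}b_\pi=\sum_{i<j}(-q)^{i+j-3}b_{ij}\Big(\sum_{\pi'\in\Pi(N\setminus\{i,j\})'}(-q)^{l(\pi')}b_{\pi'}\Big).$$
The inductive hypothesis, applied to the $2(n-1)$ indices $N\setminus\{i,j\}$, rewrites the inner sum as $\big(\sum_{\sigma\in S_{n-1}}q^{4l(\sigma)}\big)[1,2,\cdots,\hat{i},\cdots,\hat{j},\cdots,2n]$; this scalar pulls out of the whole sum, leaving $\sum_{i<j}(-q)^{i+j-3}[i,j]\,[1,2,\cdots,\hat{i},\cdots,\hat{j},\cdots,2n]$, which by Lemma~\ref{L:Ph2a} equals $\big(\sum_{k=0}^{n-1}q^{4k}\big)[1,2,\cdots,2n]$. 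It then remains only to invoke the factorization of the $q^4$-factorial, $\sum_{\sigma\in S_n}q^{4l(\sigma)}=\big(\sum_{\sigma\in S_{n-1}}q^{4l(\sigma)}\big)\big(\sum_{k=0}^{n-1}q^{4k}\big)$, which closes the induction.

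Two points need attention, but neither is a genuine obstacle. First, Lemma~\ref{L:Ph2a} and the theorem are stated for the standard index set $\{1,2,\cdots,2m\}$, whereas the induction applies them to an arbitrary even-order subset of $N$; this is harmless, since an order-preserving relabeling of indices carries the relations~(\ref{reb}) and all lengths $l(\pi)$ to their counterparts and identifies $\Pi$ and $\Pi'$ with their analogues, so the subset versions hold verbatim. Second, one must keep the bookkeeping of $l(\pi)=i_1+j_1-3+l(\pi')$ and of the two scalar factors straight. The real content of the statement — that permuting the pairs contributes only the $q^4$-symmetrization factor rather than a tangle of cross-terms forced by~(\ref{reb}) — is already packaged inside Lemma~\ref{L:Ph2a}, so the theorem follows from it by the telescoping of scalar factors described above.
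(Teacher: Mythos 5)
Your proposal is correct and follows essentially the same route as the paper: induction on $n$, grouping $\Pi'$ by the first pair via the length identity $l(\pi)=i_1+j_1-3+l(\pi')$, applying the inductive hypothesis to the inner sum, and then invoking Lemma~\ref{L:Ph2a} together with the factorization $\sum_{\sigma\in S_n}q^{4l(\sigma)}=\bigl(\sum_{\sigma\in S_{n-1}}q^{4l(\sigma)}\bigr)\bigl(\sum_{k=0}^{n-1}q^{4k}\bigr)$. Your explicit justification of the order-preserving relabeling needed to apply the statement to the subset $N\setminus\{i,j\}$ is a detail the paper leaves implicit, but it does not change the argument.
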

\begin{proof} This is proved again by induction on n. The case of
$n=2$ is obvious. Suppose it is true for $n-1$, then
\begin{align*}
&\sum_{\pi\in \Pi'}(-q)^{l(\pi)}[i_1,j_1][i_2,j_2]\cdots[i_n,j_n]\\
=&\sum_{i<j}\sum_{\pi\in\Pi(N\backslash\{i,j\})'}(-q)^{l(\pi)+i+j-3}[i,j]b_{\pi}\\
=&\sum_{i<j}(-q)^{i+j-3}[i,j]({\sum_{\sigma \in S_{n-1}} q^{4l(\sigma)}})[1,2,\cdots,\hat{i},\cdots,\hat{j},\cdots,2n]\\
=&\big(\sum_{\sigma\in S_{n-1}}q^{4l(\sigma)}\big)(\sum_{i=0}^{n-1}q^{4i})[1,2,\cdots ,2n]\\
=&{\sum_{\sigma \in S_n}q^{4l(\sigma)}}[1,2,\cdots ,2n],
\end{align*}
where we have used Lemma \ref{L:Ph2a} in the last step.
\end{proof}

The following result shows that the quantum Pfaffian is a volume element
for a quantum exterior form.

\begin{theorem}
Let $\Omega=\sum_{1\leq i<j\leq 2n}b_{ij}(x_i\wedge x_j)$, one has
$$\wedge ^n\Omega=({\sum_{\sigma \in S_n} q^{4l(\sigma)}})[1,2,\cdots ,2n]x_1\wedge x_2\wedge\cdots \wedge x_{2n}.$$
\end{theorem}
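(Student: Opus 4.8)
The plan is to expand the $n$-fold wedge power $\wedge^n\Omega = \Omega\wedge\Omega\wedge\cdots\wedge\Omega$ directly inside $B\otimes\Lambda$, exactly in the spirit of the proof that $\omega_1\wedge\cdots\wedge\omega_n = {\det}_q(a_{ij})\,x_1\wedge\cdots\wedge x_n$, and then read off the coefficient of the top form $x_1\wedge\cdots\wedge x_{2n}$. First I would multiply out: a term of $\wedge^n\Omega$ is obtained by choosing, from the $k$-th factor $\Omega$, a summand $b_{i_kj_k}\,x_{i_k}\wedge x_{j_k}$ with $i_k<j_k$. Since $x_i$ commutes with every $b_{jk}$ and $x_i\wedge x_i=0$, only those terms in which the $2n$ indices $i_1,j_1,\ldots,i_n,j_n$ are pairwise distinct (hence exhaust $\{1,\ldots,2n\}$) survive. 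Thus the surviving terms are indexed precisely by the ordered sequences of pairs $\pi=\{(i_1,j_1),\ldots,(i_n,j_n)\}\in\Pi'$, and the $\pi$-term equals $b_{i_1j_1}\cdots b_{i_nj_n}\,(x_{i_1}\wedge x_{j_1})\wedge\cdots\wedge(x_{i_n}\wedge x_{j_n}) = b_\pi\,(x_{i_1}\wedge x_{j_1})\wedge\cdots\wedge(x_{i_n}\wedge x_{j_n})$, using the very definition of $b_\pi$.

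Next I would normalize the $\Lambda$-part. The word $x_{i_1}\wedge x_{j_1}\wedge\cdots\wedge x_{i_n}\wedge x_{j_n}$ is the image of $x_1\wedge\cdots\wedge x_{2n}$ under the permutation
$$\sigma=\begin{pmatrix}1 & 2 & 3 & 4 & \cdots & 2n-1 & 2n\\ i_1 & j_1 & i_2 & j_2 & \cdots & i_n & j_n\end{pmatrix}$$
associated to $\pi$ in the definition preceding Theorem \ref{T:Ph1}; sorting it back to $x_1\wedge\cdots\wedge x_{2n}$ by successive adjacent transpositions, each governed by relation (\ref{qwedge1}), contributes the factor $(-q)^{l(\sigma)}=(-q)^{l(\pi)}$. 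Therefore
$$\wedge^n\Omega=\Big(\sum_{\pi\in\Pi'}(-q)^{l(\pi)}b_\pi\Big)\,x_1\wedge\cdots\wedge x_{2n},$$
and applying Theorem \ref{pf1} to the bracketed sum immediately yields $\big(\sum_{\sigma\in S_n}q^{4l(\sigma)}\big)[1,2,\ldots,2n]\,x_1\wedge\cdots\wedge x_{2n}$, which is the claim.

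The two points that need care are purely bookkeeping ones. First, because the $b_{ij}$ do not commute, one must keep the factors $b_{i_kj_k}$ in the order in which the copies of $\Omega$ were taken, so that the coefficient is genuinely $b_\pi$ and not a reshuffled product; this is exactly why the correct index set is $\Pi'$ (ordered sequences of pairs) rather than $\Pi$, and why it is Theorem \ref{pf1}, the sum over $\Pi'$, that is invoked here rather than Theorem \ref{T:Ph1}. Second, one should check that the power collected when sorting $x_{i_1}\wedge x_{j_1}\wedge\cdots\wedge x_{i_n}\wedge x_{j_n}$ really is $(-q)^{l(\pi)}$ with the same convention $l(\pi)=l(\sigma)$ used throughout: each adjacent swap of an out-of-order pair $x_b\wedge x_a$ ($a<b$) contributes precisely the factor $-q$ by (\ref{qwedge1}), and a bubble sort uses exactly $l(\sigma)$ such swaps. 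Neither point is a real obstacle, so the theorem is essentially a one-line corollary of Theorem \ref{pf1} once the expansion is set up correctly.
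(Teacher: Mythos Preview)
Your proof is correct and follows exactly the same route as the paper: expand $\wedge^n\Omega$ to obtain $\sum_{\pi\in\Pi'}(-q)^{l(\pi)}b_\pi\,x_1\wedge\cdots\wedge x_{2n}$, then invoke Theorem~\ref{pf1}. The paper's own proof simply asserts the expansion identity in one line and applies Theorem~\ref{pf1}; you have merely supplied the bookkeeping details (surviving terms indexed by $\Pi'$, coefficient $b_\pi$, sorting factor $(-q)^{l(\pi)}$) that the paper leaves implicit.
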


\begin{proof} Note that
$$\wedge ^n\Omega=\sum_{\pi\in \Pi'}(-q)^{l(\pi)}b_{\pi}\\x_1\wedge x_2\wedge\cdots \wedge x_{2n}.$$
The result follows immediately from Theorem \ref{pf1}.
\end{proof}

It is known that every $2n$-order determinant can be expressed as an $n$-order
Pfaffian. In fact for an anti-symmetric matrix $A=(a_{ij})_{1\leq i,j\leq 2n}$, let
\begin{equation*}
b_{ij}=\sum_{m=1}^n(a_{2m-1,i}a_{2m,j}-a_{2m-1,j}a_{2m,i}),1\leq
i<j\leq 2n,
\end{equation*}
then
$$\mbox{det}(A)=\mbox{Pf} (b_{ij}).
$$

Similarly, in the quantum coordinate ring $A$ if we let
$$b_{ij}=\sum_{m=1}^n(a_{2m-1,i}a_{2m,j}-qa_{2m-1,j}a_{2m,i}),1\leq i<j\leq 2n,$$
then the matrix $(b_{ij})$ also satisfies some
quadratic relations called quantum anti-symmetric relations
\cite{FRT, R, D}. It was proved \cite{JR} that $\mbox{Pf}_q(b_{ij})={\det}_q(a_{ij})$, which
was obtained by using representation theory of
the quantum enveloping algebra. Here we give a new proof
independent from representation theory.

\begin{theorem} On the quantum coordinate ring $M_q(2n)=<a_{ij}>$ one has that
$$\Pf_q(B)={\det}_q(A),$$
where $b_{ij}=\sum_{m=1}^n(a_{2m-1,i}a_{2m,j}-qa_{2m-1,j}a_{2m,i})$.
\end{theorem}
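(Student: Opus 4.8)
The plan is to reduce the identity $\Pf_q(B)={\det}_q(A)$ to the quantum Pl\"ucker relations established earlier, by interpreting both sides as coefficients of a single top-degree form in a quantum exterior algebra. First I would introduce the $1$-forms $\omega_i=\sum_{k=1}^{2n}a_{ik}x_k$ in $A\otimes\Lambda_{2n}$ and consider the element $\Omega=\sum_{m=1}^n\omega_{2m-1}\wedge\omega_{2m}$. Expanding $\omega_{2m-1}\wedge\omega_{2m}=\sum_{i<j}(a_{2m-1,i}a_{2m,j}-qa_{2m-1,j}a_{2m,i})\,x_i\wedge x_j$ shows that $\Omega=\sum_{i<j}b_{ij}\,x_i\wedge x_j$, exactly the $2$-form whose $n$th wedge power computes $\Pf_q(B)$ up to the factor $\sum_{\sigma\in S_n}q^{4l(\sigma)}$ by the volume-element theorem just proved. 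On the other hand, $\wedge^n\Omega=\sum \omega_{2m_1-1}\wedge\omega_{2m_1}\wedge\cdots\wedge\omega_{2m_n-1}\wedge\omega_{2m_n}$ summed over sequences $(m_1,\dots,m_n)$; the diagonal terms $\omega_1\wedge\omega_2\wedge\cdots\wedge\omega_{2n}$ (in all $n!$ orders of the pairs) contribute $n!\,{\det}_q(A)\,x_1\wedge\cdots\wedge x_{2n}$ after accounting for the sign $(-q)^{l}$ picked up by permuting the pairs — but each pair $\omega_{2m-1}\wedge\omega_{2m}$ is an even element, so the pairs commute under $\wedge$ and this just gives a factor $n!$.

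The real content is that the off-diagonal terms, where some block index $m$ is repeated, must vanish. A repeated block gives a factor $\omega_{2m-1}\wedge\omega_{2m}\wedge\omega_{2m-1}\wedge\omega_{2m}$, which is $\omega_{2m-1}\wedge\omega_{2m}\wedge\omega_{2m-1}\wedge\omega_{2m}$; since each $\omega_i$ is a $1$-form with $\omega_i\wedge\omega_i=0$, but here we have $\omega_{2m-1}\wedge\omega_{2m}\wedge\omega_{2m-1}=(-q)^{-1}\omega_{2m-1}\wedge\omega_{2m-1}\wedge\omega_{2m}\cdot(\pm)$, and one checks using $\omega_j\wedge\omega_i=-q\,\omega_i\wedge\omega_j$ that the square of the $2$-form $\omega_{2m-1}\wedge\omega_{2m}$ is zero. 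More precisely: $(\omega_{2m-1}\wedge\omega_{2m})\wedge(\omega_{2m-1}\wedge\omega_{2m})=-\omega_{2m-1}\wedge(\omega_{2m}\wedge\omega_{2m-1})\wedge\omega_{2m}$ — wait, I should be careful with the $q$-signs rather than ordinary signs. The cleanest route: pull one $\omega_{2m-1}$ past $\omega_{2m}$ using $\omega_{2m}\wedge\omega_{2m-1}=-q\,\omega_{2m-1}\wedge\omega_{2m}$... this only rescales, never kills the term directly, so I must instead argue that in the full sum $\wedge^n\Omega$ the terms with repeated blocks cancel against each other in pairs (swapping the two occurrences of the repeated block and using the $q$-commutation of the even $2$-forms to produce opposite contributions). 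This cancellation is precisely the kind of statement packaged by the quantum Pl\"ucker relations of Proposition \ref{thp1} and Corollary \ref{thp2}, applied to the $2\times 2n$ sub-configuration; indeed $b_{ij}=\xi^{2m-1,2m}_{i,j}$-type minors of $A$ summed over $m$, and the vanishing identities among products of such $\xi$'s are exactly what Remark \ref{remark1} grants us.

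So the key steps, in order, are: (1) show $\Omega=\sum_{m}\omega_{2m-1}\wedge\omega_{2m}=\sum_{i<j}b_{ij}\,x_i\wedge x_j$ by direct expansion; (2) invoke the volume-element theorem to get $\wedge^n\Omega=\big(\sum_{\sigma\in S_n}q^{4l(\sigma)}\big)\Pf_q(B)\,x_1\wedge\cdots\wedge x_{2n}$; (3) independently expand $\wedge^n\Omega$ as a sum over maps $\{1,\dots,n\}\to\{1,\dots,n\}$ of wedges of blocks, separating the $n!$ injective (permutation) terms from the rest; (4) evaluate the injective terms, each equal to $\omega_1\wedge\omega_2\wedge\cdots\wedge\omega_{2n}={\det}_q(A)\,x_1\wedge\cdots\wedge x_{2n}$ since the even $2$-form blocks wedge-commute, yielding $n!\,{\det}_q(A)$ — and here I must check the combinatorial identity $\sum_{\sigma\in S_n}q^{4l(\sigma)}=n!$ fails over $\mathbb{C}$, so actually the surviving count is $\sum_{\sigma\in S_n}q^{4l(\sigma)}$ coming from the fact that reordering blocks $\omega_{2a-1}\wedge\omega_{2a}$ past $\omega_{2b-1}\wedge\omega_{2b}$ costs $q^4$, not $1$; (5) show all non-injective terms vanish via the quantum Pl\"ucker relations (Corollary \ref{thp2} and Remark \ref{remark1}), so that $\big(\sum_{\sigma}q^{4l(\sigma)}\big)\Pf_q(B)=\big(\sum_{\sigma}q^{4l(\sigma)}\big){\det}_q(A)$, and cancel the common nonzero scalar factor. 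The main obstacle is step (5): one must verify that the precise $q$-powers arising when two blocks collide match, term by term, the coefficients in the quantum Pl\"ucker relations, which requires a careful length/sign bookkeeping identifying the relevant minors $\sum_m \xi^{2m-1,2m}_{\cdot,\cdot}$ with the objects $(i_1,\dots,i_n)$ appearing in Corollary \ref{thp2}; I expect this to be the delicate computational heart of the argument, while everything else is formal manipulation in $A\otimes\Lambda_{2n}$.
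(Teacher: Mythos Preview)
Your double-counting strategy for $\wedge^n\Omega$ is exactly the paper's, but you have swapped the easy step for the hard one. Step~(5) is a non-issue: the blocks $\Omega_m=\omega_{2m-1}\wedge\omega_{2m}$ satisfy $\Omega_m\wedge\Omega_m=0$ immediately, because
\[
\omega_{2m-1}\wedge\omega_{2m}\wedge\omega_{2m-1}\wedge\omega_{2m}
=(-q)\,\omega_{2m-1}\wedge\omega_{2m-1}\wedge\omega_{2m}\wedge\omega_{2m}=0.
\]
You had this computation in hand and abandoned it (``this only rescales''), but after that single rescaling you are staring at $\omega_{2m-1}\wedge\omega_{2m-1}=0$. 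More generally, any wedge of $\omega$'s with a repeated index vanishes because the $\omega_i$ themselves obey the quantum exterior relations. So all non-injective terms in $\wedge^n\Omega$ die for free, and the injective ones give $(\sum_{\sigma\in S_n}q^{4l(\sigma)})\det_q(A)$ as you eventually noted. No Pl\"ucker relations are needed on this side.

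The genuine gap is in your step~(2). The volume-element theorem you invoke is proved in the algebra $B$, i.e.\ under the hypothesis that the $b_{ij}$ satisfy relation~(\ref{reb}); without that hypothesis the coefficient of $x_1\wedge\cdots\wedge x_{2n}$ in $\wedge^n\Omega$ is only $\sum_{\pi\in\Pi'}(-q)^{l(\pi)}b_\pi$, which is not a priori $(\sum_\sigma q^{4l(\sigma)})\Pf_q(B)$. Verifying (\ref{reb}) for $b_{ij}=\sum_m\xi^{2m-1,2m}_{i,j}$ is the actual ``delicate computational heart,'' and this is precisely where the quantum Pl\"ucker relations enter: the paper uses Proposition~\ref{thp1} to kill the diagonal $s=t$ contributions and Proposition~\ref{thp3} (not only Corollary~\ref{thp2}) to handle the cross terms $s\neq t$, showing that both sides of (\ref{reb}) equal $\sum_{s<t}\xi^{2s-1,2s,2t-1,2t}_{i,j,k,l}$. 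Once that is done, the volume-element theorem applies and the comparison of the two expressions for $\wedge^n\Omega$ finishes the proof.
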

\begin{proof}
First we show that
$b_{ij}=\sum_{m=1}^n(a_{2m-1,i}a_{2m,j}-qa_{2m-1,j}a_{2m,i})$ satisfy the relation (\ref{reb}).
Note that
Proposition \ref{thp1} is true for any distinctive rows or columns. Therefore we have the following relations:
\begin{equation*}
\begin{split}
&\xi^{2m-1,2m}_{i,j}\xi^{2m-1,2m}_{k,l}-q\xi^{2m-1,2m}_{i,k}\xi^{2m-1,2m}_{j,l}+q^2\xi^{2m-1,2m}_{i,l}\xi^{2m-1,2m}_{j,k}=0,\\
&\xi^{2m-1,2m}_{k,l}\xi^{2m-1,2m}_{i,j}-q^{-1}\xi^{2m-1,2m}_{j,l}\xi^{2m-1,2m}_{i,k}+q^{-2}
\xi^{2m-1,2m}_{j,k}\xi^{2m-1,2m}_{i,l}=0.
\end{split}
\end{equation*}
where $m=1,2,\cdots,n$ and $i<j<k<l$.

Similarly Proposition \ref{thp3} is also independent from the rows and columns. If $n=2$, and we choose rows $2s-1, 2s, 2t-1, 2t$, where $s<t$, and columns
$i<j<k<l$, so we have the following relations.
\begin{align*}
&\xi^{2t-1,2t}_{i,j}\xi^{2s-1,2s}_{k,l}+(-q)\xi^{2t-1,2t}_{i,k}\xi^{2s-1,2s}_{j,l}+(-q)^2\xi^{2t-1,2t}_{i,l}\xi^{2s-1,2s}_{j,k}\\
=&(-q)^4\xi^{2s-1,2s}_{k,l}\xi^{2t-1,2t}_{i,j}+(-q)^{3}\xi^{2s-1,2s}_{j,l}\xi^{2t-1,2t}_{i,k}
+(-q)^2\xi^{2s-1,2s}_{j,k}\xi^{2t-1,2t}_{i,l}.
\end{align*}
So we have
\begin{align*}
&\xi^{2s-1,2s}_{i,j}\xi^{2t-1,2t}_{k,l}+(-q)\xi^{2s-1,2s}_{i,k}\xi^{2t-1,2t}_{j,l}+(-q)^2\xi^{2s-1,2s}_{i,l}\xi^{2t-1,2t}_{j,k}\\
&+\xi^{2t-1,2t}_{i,j}\xi^{2s-1,2s}_{k,l}+(-q)\xi^{2t-1,2t}_{i,k}\xi^{2s-1,2s}_{j,l}+(-q)^2\xi^{2t-1,2t}_{i,l}\xi^{2s-1,2s}_{j,k}\\
=&\xi^{2s-1,2s}_{i,j}\xi^{2t-1,2t}_{k,l}+(-q)\xi^{2s-1,2s}_{i,k}\xi^{2t-1,2t}_{j,l}+(-q)^2\xi^{2s-1,2s}_{i,l}\xi^{2t-1,2t}_{j,k}\\
&+(-q)^4\xi^{2s-1,2s}_{k,l}\xi^{2t-1,2t}_{i,j}+(-q)^{3}\xi^{2s-1,2s}_{j,l}\xi^{2t-1,2t}_{i,k}+(-q)^2\xi^{2s-1,2s}_{j,k}\xi^{2t-1,2t}_{i,l}\\
=&\xi^{2s-1,2s,2t-1,2t}_{i,j,k,l}.
\end{align*}
Also we have that
\begin{align*}
&\xi^{2t-1,2t}_{k,l}\xi^{2s-1,2s}_{i,j}+(-q)^{-1}\xi^{2t-1,2t}_{j,l}\xi^{2s-1,2s}_{i,k}+(-q)^{-2}\xi^{2t-1,2t}_{j,k}\xi^{2s-1,2s}_{i,l}\\
&+\xi^{2s-1,2s}_{k,l}\xi^{2t-1,2t}_{i,j}+(-q)^{-1}\xi^{2s-1,2s}_{j,l}\xi^{2t-1,2t}_{i,k}+(-q)^{-2}\xi^{2s-1,2s}_{j,k}\xi^{2t-1,2t}_{i,l}\\
=&\xi^{2t-1,2t}_{k,l}\xi^{2s-1,2s}_{i,j}+(-q)^{-1}\xi^{2t-1,2t}_{j,l}\xi^{2s-1,2s}_{i,k}+(-q)^{-2}\xi^{2t-1,2t}_{j,k}\xi^{2s-1,2s}_{i,l}\\
&+(-q)^{-4}\xi^{2t-1,2t}_{i,j}\xi^{2s-1,2s}_{k,l}+(-q)^{-3}\xi^{2t-1,2t}_{i,k}\xi^{2s-1,2s}_{j,l}+(-q)^{-2}\xi^{2t-1,2t}_{i,l}\xi^{2s-1,2s}_{j,k}\\
=&\xi^{2s-1,2s,2t-1,2t}_{i,j,k,l}.
\end{align*}

Since $b_{ij}$ can be expressed as
\begin{equation}\label{e:b}
b_{ij}=\sum_{m=1}^n(a_{2m-1,i}a_{2m,j}-qa_{2m-1,j}a_{2m-1,i})=\sum_{m=1}^n\xi^{2m-1,2m}_{i,j},
\end{equation}
then we have
\begin{align*}
&b_{ij}b_{kl}+(-q)b_{ik}b_{jl}+(-q)^{2}b_{il}b_{jk}\\
=&\sum_{m=1}^n\xi^{2m-1,2m}_{i,j}\sum_{m=1}^n\xi^{2m-1,2m}_{k,l}+(-q)\sum_{m=1}^n\xi^{2m-1,2m}_{i,k}\sum_{m=1}^n\xi^{2m-1,2m}_{j,l}\\
+&(-q)^2\sum_{m=1}^n\xi^{2m-1,2m}_{i,l}\sum_{m=1}^n\xi^{2m-1,2m}_{j,k}\\
=&\sum_{s<t}(\xi^{2s-1,2s}_{i,j}\xi^{2t-1,2t}_{k,l}+\xi^{2t-1,2t}_{i,j}\xi^{2s-1,2s}_{k,l})+(-q)\sum_{s<t}(\xi^{2s-1,2s}_{i,k}\xi^{2t-1,2t}_{j,l}+\xi^{2t-1,2t}_{i,k}\xi^{2s-1,2s}_{j,l})\\
&+(-q)^2\sum_{s<t}(\xi^{2s-1,2s}_{i,l}\xi^{2t-1,2t}_{j,k}+\xi^{2t-1,2t}_{i,l}\xi^{2s-1,2s}_{j,k})\\
=&\sum_{s<t}\xi^{2s-1,2s,2t-1,2t}_{i,j,k,l}.
\end{align*}
On the other hand we also have
\begin{align*}
&b_{kl}b_{ij}+(-q)^{-1}b_{jl}b_{ik}+(-q)^{-2}b_{jk}b_{il}\\
=&\sum_{m=1}^n\xi^{2m-1,2m}_{k,l}\sum_{m=1}^n\xi^{2m-1,2m}_{i,j}+(-q)^{-1}\sum_{m=1}^n\xi^{2m-1,2m}_{j,l}\sum_{m=1}^n\xi^{2m-1,2m}_{i,k}\\
&+(-q)^{-2}\sum_{m=1}^n\xi^{2m-1,2m}_{j,k}\sum_{m=1}^n\xi^{2m-1,2m}_{i,l}\\
=&\sum_{s<t}(\xi^{2t-1,2t}_{k,l}\xi^{2s-1,2s}_{i,j}+\xi^{2s-1,2s}_{k,l}\xi^{2t-1,2t}_{i,j})+(-q)^{-1}\sum_{s<t}(\xi^{2t-1,2t}_{j,l}\xi^{2s-1,2s}_{i,k}+\xi^{2s-1,2s}_{j,l}\xi^{2t-1,2t}_{i,k})\\
&+(-q)^{-2}\sum_{s<t}(\xi^{2t-1,2t}_{j,k}\xi^{2s-1,2s}_{i,l}+\xi^{2s-1,2s}_{j,k}\xi^{2t-1,2t}_{i,l})\\
=&\sum_{s<t}\xi^{2s-1,2s,2t-1,2t}_{i,j,k,l}.
\end{align*}

Therefore $b_{ij}$ satisfy the quadratic Pl\"ucker relations:
$$b_{ij}b_{kl}+(-q)b_{ik}b_{jl}+(-q)^{2}b_{il}b_{jk}=b_{kl}b_{ij}+(-q)^{-1}b_{jl}b_{ik}+(-q)^{-2}b_{jk}b_{il}.
$$
Now we consider the two-form
\begin{equation}
\Omega=\omega_1\wedge\omega_2+\omega_3\wedge\omega_4+\cdots+\omega_{2n-1}\wedge\omega_{2n},
\end{equation}
where $\omega_i=\Sigma_{j=1}^{2n} a_{ij}x_j,(i =1,2, . . . , 2n)$. Taking wedge products, we have
\begin{align} \nonumber  
\bigwedge^n\Omega&=(\sum_{\sigma\in S_n}q^{4l(\sigma)})\omega_1\wedge\omega_2\cdots\wedge\omega_{2n-1}\wedge\omega_{2n}\\ \label{p2}
&=(\sum_{\sigma\in S_n}q^{4l(\sigma)})\mbox{det}_q(a_{ij})x_1 \wedge x_2 \wedge \cdots \wedge x_{2n},
\end{align}
by definition of the quantum determinant.
On the other hand,
$$\Omega=\sum_{1\leq i<j\leq 2n}b_{ij}x_i\wedge x_j,$$
where $b_{ij}$ are given by Eq. (\ref{e:b}), $1\leq i<j\leq 2n$.
We have showed that $b_{ij}$ satisfy Eq.(\ref{reb}), so we have
\begin{equation}\label{om2}
\bigwedge^n\Omega=(\sum_{\sigma\in S_n}(-q)^{4l(\sigma)})[1,2,\cdots,2n]x_1 \wedge x_2\wedge\cdots\wedge x_n,
\end{equation}
where $[i,j]=b_{ij}$, $i<j$.
Comparing two equations (\ref{p2}) and (\ref{om2}) we obtain that
$$\mbox{det}_q(A)=[1,2,\cdots,2n].$$
\end{proof}

The following corollary shows that an $(2n+1)$th-order $q$-determinant can be expressed as a $(n+1)$th-order $q$-Pfaffian.

\begin{corollary} Any $(2n+1)$th-order $q$-determinant
can be expressed as a $(n+1)$th-order $q$-Pfaffian.
\begin{equation*}
{\det}_q(a_{ij})=[1,2,\cdots,2n,2n+1,2n+2],
\end{equation*}
where
\begin{align*}
&[i,j]=b_{ij}=\sum_{m=1}^n(a_{2m-1,i}a_{2m,j}-qa_{2m-1,j}a_{2m,i}),\\
&[i,2n+2]=a_{2n+1,i},
\end{align*}
where $1\leq i<j\leq 2n+1$.
\end{corollary}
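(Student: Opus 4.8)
The plan is to reduce this odd-order statement to the even-order theorem just established, via the $q$-analogue of the classical trick of bordering $A$ by a trivial row and column. Given the $(2n+1)\times(2n+1)$ quantum matrix $A=(a_{ij})$, introduce a $(2n+2)\times(2n+2)$ array $\widetilde A=(\widetilde a_{ij})$ with entries in $A$ defined by
\[
\widetilde a_{ij}=a_{ij}\ \ (1\le i,j\le 2n+1),\qquad \widetilde a_{i,2n+2}=\widetilde a_{2n+2,i}=0\ \ (1\le i\le 2n+1),\qquad \widetilde a_{2n+2,2n+2}=1.
\]
First I would check that these elements satisfy the defining relations (\ref{relation a1})--(\ref{relation a4}) of $Mat_q(2n+2)$: whenever one of the four indices equals $2n+2$ the entry involved is either $0$ or the central scalar $1$, so each relation reduces to $0=0$ or to a trivial cancellation, while for all other index choices the relations are exactly those already holding in $A$. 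Hence there is an algebra homomorphism $\varphi\colon Mat_q(2n+2)\to A$ carrying the generators to the $\widetilde a_{ij}$, and every polynomial identity valid in $Mat_q(2n+2)$ is pushed forward to one in $A$.

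Applying the preceding theorem with $n$ replaced by $n+1$ to $\widetilde A$ gives $\Pf_q(\widetilde B)={\det}_q(\widetilde A)$ in $Mat_q(2n+2)$, where $\widetilde b_{ij}=\sum_{m=1}^{n+1}\big(\widetilde a_{2m-1,i}\widetilde a_{2m,j}-q\,\widetilde a_{2m-1,j}\widetilde a_{2m,i}\big)$ for $1\le i<j\le 2n+2$ and the $(n+1)$-th order $q$-Pfaffian is formed with $[i,j]=\widetilde b_{ij}$. I then evaluate both sides under $\varphi$. For the determinant: in the defining sum over $S_{2n+2}$ every term with $\sigma(2n+2)\ne 2n+2$ carries the factor $\widetilde a_{2n+2,\sigma(2n+2)}=0$, so only permutations fixing $2n+2$ survive; since $2n+2$ is the largest element it lies in no inversion, so such a $\sigma$ has the same length as the permutation it induces on $\{1,\cdots,2n+1\}$, and as $\widetilde a_{2n+2,2n+2}=1$ the sum collapses to ${\det}_q(A)$. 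For the Pfaffian entries: in $\widetilde b_{ij}$ the term $m=n+1$ uses rows $2n+1$ and $2n+2$; because $\widetilde a_{2n+1,2n+2}=0$ and $\widetilde a_{2n+2,k}=0$ for $k\le 2n+1$, this term vanishes when $j\le 2n+1$ and equals $\widetilde a_{2n+1,i}\widetilde a_{2n+2,2n+2}=a_{2n+1,i}$ when $j=2n+2$, whereas for $m\le n$ the factors $\widetilde a_{2m-1,2n+2}=\widetilde a_{2m,2n+2}=0$ annihilate any contribution to $\widetilde b_{i,2n+2}$. Thus after applying $\varphi$ one has $\widetilde b_{ij}=\sum_{m=1}^n\big(a_{2m-1,i}a_{2m,j}-q\,a_{2m-1,j}a_{2m,i}\big)$ for $i<j\le 2n+1$ and $\widetilde b_{i,2n+2}=a_{2n+1,i}$ --- precisely the $b_{ij}$ and $[i,2n+2]$ of the statement. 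Combining the two computations, $[1,2,\cdots,2n,2n+1,2n+2]=\Pf_q(\widetilde B)={\det}_q(\widetilde A)={\det}_q(A)$.

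I do not anticipate a genuine obstacle: the result is a formal consequence of the even-order theorem, and the only points that need attention are bookkeeping ones --- that padding with zeros and a central $1$ is compatible with (\ref{relation a1})--(\ref{relation a4}), so that the theorem legitimately applies to $\widetilde A$; that the length statistic is unchanged when passing between $S_{2n+1}$ and its copy fixing $2n+2$ inside $S_{2n+2}$; and that one correctly tracks which $\widetilde b_{ij}$ collect the extra term $a_{2n+1,i}$. No quadratic relations for the $b_{ij}$ beyond (\ref{reb}) --- already verified in the proof of the preceding theorem --- are needed.
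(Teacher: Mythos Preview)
Your proposal is correct and follows the same route as the paper: extend the $(2n+1)\times(2n+1)$ quantum matrix by a trivial last row and column and invoke the even-order theorem. Your argument is in fact more careful than the paper's one-line proof, which only records $a_{2n+2,i}=a_{i,2n+2}=0$ for $i\le 2n+1$ and leaves implicit the choice $\widetilde a_{2n+2,\,2n+2}=1$ and the routine verifications you spell out.
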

\begin{proof}
Let $a_{2n+2,i}=a_{i,2n+2}=0,1\leq i\leq 2n+1$ ,then $a_{ij}'s$ satisfied the relations of the quantum coordinate ring $A$. The corollary can be deduced from the previous theorem.
\end{proof}

If we let
\begin{equation}\label{e:col-b}
b'_{ij}=\sum_{m=1}^n(a_{i,2m-1}a_{j,2m}-qa_{j,2m-1}a_{i,2m}),1\leq i<j\leq 2n,
\end{equation}
we also can get the following theorem.

\begin{theorem} In the quantum coordinate ring $M_q(2n)$ we have
$$
\Pf_q(B')={\det}_q(A).
$$
\end{theorem}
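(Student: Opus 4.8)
The plan is to reproduce the proof of the preceding theorem $\Pf_q(B)={\det}_q(A)$, replacing the row $1$-forms $\omega_i=\sum_j a_{ij}x_j$ by the column $1$-forms $\omega_i'=\sum_{j=1}^{2n}a_{ji}x_j$, and invoking Remark~\ref{remark1} for the column analogues of the quantum Pl\"ucker relations.

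First I would rewrite the generators of $B'$ as sums of $2\times2$ column minors,
$$b'_{ij}=\sum_{m=1}^n(a_{i,2m-1}a_{j,2m}-qa_{j,2m-1}a_{i,2m})=\sum_{m=1}^n\xi^{i,j}_{2m-1,2m},$$
where $\xi^{i,j}_{2m-1,2m}$ now uses rows $i,j$ and columns $2m-1,2m$. By Remark~\ref{remark1}, Propositions~\ref{thp1} and~\ref{thp3} still hold after exchanging upper and lower indices, so for each pair of column blocks $\{2s-1,2s\},\{2t-1,2t\}$ with $s<t$ and rows $i<j<k<l$ one obtains exactly the quadratic identities that were used to treat the $b_{ij}$; summing them over $s<t$ shows that the $b'_{ij}$ satisfy the relation (\ref{reb}) verbatim.

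Next, form the two-form $\Omega'=\sum_{1\le i<j\le 2n}b'_{ij}\,x_i\wedge x_j$. Expanding and using $x_j\wedge x_i=-q\,x_i\wedge x_j$ gives $\Omega'=\sum_{m=1}^n\omega'_{2m-1}\wedge\omega'_{2m}$. Since the $b'_{ij}$ obey (\ref{reb}), the volume-element theorem above (proved via Theorem~\ref{pf1}) yields
$$\bigwedge^n\Omega'=\Big(\sum_{\sigma\in S_n}q^{4l(\sigma)}\Big)\Pf_q(B')\,x_1\wedge\cdots\wedge x_{2n}.$$
On the other hand, the $\omega'_i$ obey the quantum exterior relations $\omega'_i\wedge\omega'_i=0$ and $\omega'_j\wedge\omega'_i=-q\,\omega'_i\wedge\omega'_j$ for $i<j$ established earlier, so the same combinatorial expansion used for $\Omega$ (only the terms using all $n$ blocks survive, and reordering the blocks to $1,2,\dots,2n$ costs $(-q)^{4l(\sigma)}=q^{4l(\sigma)}$) gives
$$\bigwedge^n\Omega'=\Big(\sum_{\sigma\in S_n}q^{4l(\sigma)}\Big)\omega'_1\wedge\cdots\wedge\omega'_{2n}=\Big(\sum_{\sigma\in S_n}q^{4l(\sigma)}\Big){\det}_q(a_{ij})\,x_1\wedge\cdots\wedge x_{2n},$$
the last step being the column analogue $\omega'_1\wedge\cdots\wedge\omega'_{2n}={\det}_q(a_{ij})\,x_1\wedge\cdots\wedge x_{2n}$ of the determinant identity, already noted. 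Comparing the two evaluations of $\bigwedge^n\Omega'$ gives $\Pf_q(B')={\det}_q(A)$.

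The only point requiring attention is the verification that the column minors $\xi^{i,j}_{2m-1,2m}$ satisfy the Pl\"ucker-type relations, which is precisely what Remark~\ref{remark1} provides; everything else is a literal copy of the row case. One could also bypass this step entirely: the matrix $a'_{ij}:=a_{ji}$ again satisfies (\ref{relation a1})--(\ref{relation a4}), the equality ${\det}_q(a'_{ij})={\det}_q(a_{ij})$ follows by comparing the two equivalent expressions for the $q$-determinant, and the matrix $B'$ built from $(a_{ij})$ coincides with the matrix $B$ built from $(a'_{ij})$, so the statement is immediate from the previous theorem applied to the transpose.
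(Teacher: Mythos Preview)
Your main argument is correct and follows the paper's proof essentially verbatim: verify that the $b'_{ij}$ satisfy relation~(\ref{reb}) via the column version of the Pl\"ucker relations (Remark~\ref{remark1}), then compute $\bigwedge^n\Omega'$ two ways using the column $1$-forms $\omega'_i$. The transpose shortcut you add at the end---observing that $a'_{ij}:=a_{ji}$ again satisfies (\ref{relation a1})--(\ref{relation a4}), that ${\det}_q(a'_{ij})={\det}_q(a_{ij})$, and that $B'$ for $(a_{ij})$ is $B$ for $(a'_{ij})$---is a valid alternative not given in the paper and indeed reduces the statement immediately to the previous theorem.
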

\begin{proof} As this is similar to the row case, we only outline the steps.
First we check that the elements $b'_{ij}$ satisfy the relation (\ref{reb}). Then we define
the two-form
\begin{equation}
\Omega=\omega_1'\wedge\omega_2'+\omega_3'\wedge\omega_4'+\cdots+\omega_{2n-1}'\wedge\omega_{2n}',
\end{equation}
where $\omega_i'=\Sigma_{j=1}^{2n} a_{ji}x_j,(i =1,2, . . . , 2n)$.
It is clear thar for this $\Omega$,
$$\bigwedge^n\Omega=(\sum_{\sigma\in S_n}q^{4l(\sigma)})\mbox{det}_q(a_{ij})x_1 \wedge x_2 \wedge \cdots\wedge x_{2n}.$$
On the other hand, it is clear that the $2$-form $\Omega$ can also be expressed as
$$\Omega=\sum_{1\leq i<j\leq 2n}b'_{i,j}x_i\wedge x_j,$$
thus we have
$$\bigwedge^n\Omega=(\sum_{\sigma\in S_n}q^{4l(\sigma)})[1,2,\cdots,2n]x_1 \wedge x_2\wedge\cdots\wedge x_{2n},$$
where $[i,j]=b'_{i,j}$. Comparing these two equations we deduce that
$$\mbox{det}_q(A)=[1,2,\cdots,2n].$$
\end{proof}

\begin{corollary} An odd-order $q$-determinant also has the $q$-Pfaffian representation.
$${\det}_q(a_{ij})=[1,2,\cdots,2n,2n+1,2n+2],$$
where
\begin{align*}
&[i,j]=b'_{i,j}=\sum_{m=1}^n(a_{i,2m-1}a_{j,2m}-qa_{j,2m-1}a_{i,2m}),\\
&[i,2n+2]=a_{2n+1,i},
\end{align*}
here $1\leq i<j\leq 2n+1$. i.e., a $(2n+1)$th-order $q$-determinant
can be expressed as a $(n+1)$th-order $q$-Pfaffian.
\end{corollary}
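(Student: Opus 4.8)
The plan is to reduce this odd-order statement to the theorem just proved --- the column version $\Pf_q(B')={\det}_q(A)$, applied to $M_q(2n+2)$ in place of $M_q(2n)$ --- by the same zero-padding device used for the earlier row-version corollary.

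First I would enlarge the given $(2n+1)\times(2n+1)$ quantum matrix $A=(a_{ij})$ to a $(2n+2)\times(2n+2)$ matrix $\widetilde A$ by setting $a_{i,2n+2}=a_{2n+2,i}=0$ for $1\le i\le 2n+1$ and $a_{2n+2,2n+2}=1$, and check that the enlarged family still satisfies the defining relations (\ref{relation a1})--(\ref{relation a4}) of $M_q(2n+2)$. This is routine: in each relation every monomial on either side has a factor that is $0$ or the central unit $1$, so the two sides collapse to the same expression. Equivalently, the assignment defines an algebra homomorphism from $M_q(2n+2)$ onto a quotient of $M_q(2n+1)$, so every identity valid in $M_q(2n+2)$ descends along it.

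Next I would note that $\widetilde A$ has last row and last column equal to $(0,\dots,0,1)$, so expanding ${\det}_q(\widetilde A)$ along its last row by the quantum Cramer rule (the $r=1$ Laplace expansion established above) gives ${\det}_q(\widetilde A)={\det}_q(A)$. Applying the theorem just proved to $\widetilde A$ then yields
$${\det}_q(A)=\Pf_q(B')=[1,2,\cdots,2n+2],$$
the $(n+1)$th order $q$-Pfaffian of Theorem \ref{T:Ph1}, where $[i,j]=b'_{ij}=\sum_{m=1}^{n+1}(a_{i,2m-1}a_{j,2m}-qa_{j,2m-1}a_{i,2m})$ for $1\le i<j\le 2n+2$.

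Finally I would unwind these $b'_{ij}$ using the padding. For $1\le i<j\le 2n+1$ the $m=n+1$ summand $a_{i,2n+1}a_{j,2n+2}-qa_{j,2n+1}a_{i,2n+2}$ vanishes since column $2n+2$ is zero in rows $i$ and $j$, leaving $b'_{ij}=\sum_{m=1}^{n}(a_{i,2m-1}a_{j,2m}-qa_{j,2m-1}a_{i,2m})$, the asserted formula. For $j=2n+2$ every summand with $m\le n$ vanishes (each contains a zero entry of row or column $2n+2$), and in the $m=n+1$ summand only the term $a_{i,2n+1}a_{2n+2,2n+2}=a_{i,2n+1}$ survives, so $[i,2n+2]=a_{i,2n+1}$. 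Substituting both into $[1,2,\cdots,2n+2]$ gives the claimed $q$-Pfaffian expression for the $(2n+1)$th order $q$-determinant. The whole argument is bookkeeping; the only point that needs care is arranging the padding --- in particular putting the unit, not $0$, in the $(2n+2,2n+2)$ slot --- so that the $b'_{ij}$ of $\widetilde A$ collapse exactly onto the stated formulas while the $q$-determinant is left unchanged. I do not expect any genuine obstacle.
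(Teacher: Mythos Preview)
Your padding argument is precisely the one the paper sketches (in a single line) for the parallel row-version corollary; no separate proof is given for the column version, so yours is the intended argument, and your explicit placement of a $1$ at position $(2n+2,2n+2)$ fills a small gap the paper's sketch leaves implicit. One point to flag: your unwinding correctly yields $[i,2n+2]=a_{i,2n+1}$, not $a_{2n+1,i}$ as printed in the corollary --- the latter is the row-version formula (\ref{e:b}) and appears to be a transcription slip in the statement, since for the column construction $b'_{ij}$ of (\ref{e:col-b}) the surviving $m=n+1$ term is indeed $a_{i,2n+1}a_{2n+2,2n+2}=a_{i,2n+1}$.
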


\section{Quantum hyper-Pfaffians}
From now on, we assume that $q^{8(^k_2)}=1$, and $m=2k$ for $k\in \mathbb Z^+$.
Let $N=\{1,2,\cdots,mn\}$, $S$ be a subset of $N$ and $|S|=mp$. We define two
collections of partitions of $S$ into unordered (resp. ordered) $m$-element subsets as follows.
\begin{align*}
&\Pi(S)=\{(i_1^1,i_1^2,\cdots,i_1^m), \cdots, (i_{p}^1,i_{p}^2,\cdots,i_{p}^m);i_j^l<i_j^{l+1},i_j^1<i_{j+1}^1,i_j^l\in S\},\\
&\Pi(S)'=\{(i_1^1,i_1^2,\cdots,i_1^m), \cdots, (i_{p}^1,i_{p}^2,\cdots,i_{p}^m);i_j^l<i_j^{l+1},i_j^l\in S\}.\\
\end{align*}

We denote $\Pi=\Pi(N)$, $\Pi'=\Pi(N)'$.
Generally, we define $B$ to be the algebra generated by the
$b_{i_1,i_2,\cdots,i_m}$ for $i_1<i_2<\cdots<i_m$ modulo the ideal generated by the relations
\begin{equation}\label{relationb}
\sum_{\pi\in\Pi(S)}(-q)^{l(\pi)}b_{i_1,\cdots,i_m}b_{j_1,\cdots,j_m}=\sum_{\pi\in\Pi(S)}(-q)^{-l(\pi)}
b_{j_1,\cdots,j_m}b_{i_1,\cdots,i_m}
\end{equation}
for all subset $S$ of $\{1,2,\cdots,mn\}$ such that $|S|=2m$.
We define the quantum hyper-Pfaffian\ $[1,2,\cdots ,mn]$ inductively as follows. Let $[i_1,i_2,\cdots,i_m]_m=b_{i_1,i_2,\cdots,i_m}$, then the quantum hyper-Pfaffian
\begin{align}\nonumber
&[1,2,\cdots ,mn]_m\\
&=\sum_{i_2<\cdots<i_m}(-q)^{\sum_{t=2}^{m}(i_t-t)}[1,i_2,\cdots,i_m]_m
[2,\cdots,\hat{i_2},\cdots\hat{i_m},\cdots,mn]_m. \label{e:hyperPh}
\end{align}

\begin{proposition} One has that
\begin{align*}
[1,2,\cdots ,mn]_m&=\sum_{\pi \in\Pi}(-q)^{l(\pi)}b_{\pi}\\
&=\sum_{\pi \in\Pi}(-q)^{l(\pi)}[i_1^1,i_1^2,\cdots,i_1^m]_m\cdots[i_n^1,i_n^2,\cdots,i_n^m]_m.
\end{align*}
\end{proposition}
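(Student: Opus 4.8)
The plan is to prove this proposition by induction on $n$, exactly paralleling the proof of Theorem~\ref{T:Ph1} (the $m=2$ case). The base case $n=1$ is immediate since $[1,2,\cdots,m]_m = b_{1,2,\cdots,m}$ and $\Pi(\{1,\dots,m\})$ consists of the single partition with $l(\pi)=0$. For the inductive step, I would start from the recursive definition \eqref{e:hyperPh}, substitute the inductive hypothesis applied to the $(n-1)$-th order quantum hyper-Pfaffian $[2,\cdots,\widehat{i_2},\cdots,\widehat{i_m},\cdots,mn]_m$, and then collect terms.

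The key bookkeeping step is the following: every partition $\pi \in \Pi = \Pi(N)$ decomposes uniquely as the choice of the block $(1,i_2,\cdots,i_m)$ containing the element $1$ (with $i_2<\cdots<i_m$), together with a partition $\pi' \in \Pi(N\setminus\{1,i_2,\cdots,i_m\})$ of the remaining $m(n-1)$ elements. So I would write
\begin{align*}
&\sum_{i_2<\cdots<i_m}(-q)^{\sum_{t=2}^m(i_t-t)}[1,i_2,\cdots,i_m]_m\sum_{\pi'\in\Pi(N\setminus\{1,i_2,\cdots,i_m\})}(-q)^{l(\pi')}b_{\pi'}\\
&=\sum_{i_2<\cdots<i_m}\sum_{\pi'}(-q)^{l(\pi')+\sum_{t=2}^m(i_t-t)}\,b_{1,i_2,\cdots,i_m}\,b_{\pi'},
\end{align*}
and then verify that the exponent $l(\pi')+\sum_{t=2}^m(i_t-t)$ equals $l(\pi)$, where $\pi$ is the full partition of $N$ obtained by prepending the block $(1,i_2,\cdots,i_m)$ to $\pi'$. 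This length identity is the analogue, for $m$-element blocks, of the fact used in Theorem~\ref{T:Ph1} that $l(\pi) = l(\pi') + (j-2)$ when $\pi$ is obtained from $\pi'$ by prepending $(1,j)$; it should follow from the formula $l(\pi)=l(\pi_1)+l(\pi_2)+\sum_{i\in S}i - p(2p+1)$ stated in the excerpt (here with $S$ the first block, $p=\tfrac{m}{2}$ in that notation replaced appropriately, $\pi_1$ trivial), together with the observation that moving the sorted block $(1,i_2,\cdots,i_m)$ to the front of the one-line notation of $\pi$ costs exactly $\sum_{t=2}^m(i_t - t)$ inversions since $1$ contributes nothing and each $i_t$ must hop over the $t-2$ smaller elements of the block sitting to its left minus... — in any case the arithmetic reduces to $\sum_{t=2}^m (i_t - t)$.

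The main obstacle I expect is precisely pinning down this inversion-count identity cleanly: with $2$-element blocks it is a one-line check, but with general $m$ one must be careful that the ordered-within-block convention $i_j^l < i_j^{l+1}$ interacts correctly with the global reordering, and that the normalization $p(2p+1)$ in the stated length formula is adapted to $m$-blocks rather than pairs. Once the length identity $l(\pi) = l(\pi') + \sum_{t=2}^m(i_t-t)$ is established, the remaining steps are purely formal: the double sum over $(i_2<\cdots<i_m)$ and $\pi'$ reorganizes into a single sum over $\pi\in\Pi$, giving $\sum_{\pi\in\Pi}(-q)^{l(\pi)}b_\pi$, and rewriting $b_\pi = [i_1^1,\cdots,i_1^m]_m\cdots[i_n^1,\cdots,i_n^m]_m$ by definition completes the induction. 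Note that, as in Theorem~\ref{T:Ph1}, no use is made of the defining relations \eqref{relationb} of $B$ — this is an identity of \emph{ordered} monomials, so the modular hypothesis $q^{8\binom{k}{2}}=1$ plays no role here and is only needed later for the relations among the $b$'s themselves.
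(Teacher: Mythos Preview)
Your proposal is correct and follows exactly the same route as the paper's proof: induction on $n$, starting from the recursive definition \eqref{e:hyperPh}, applying the inductive hypothesis to the lower-order hyper-Pfaffian, and reassembling via the bijection between $\Pi(N)$ and pairs (first block containing $1$, partition of the complement). The paper in fact asserts the key length identity $l(\pi)=l(\pi')+\sum_{t=2}^m(i_t-t)$ without comment; your hesitation about it is unwarranted, since it drops out cleanly by counting inversions between the first block and the rest --- for each $t$, the value $i_t$ in position $t$ creates exactly $i_t-t$ inversions with the $m(n-1)$ later positions (the elements of $N\setminus\{1,i_2,\dots,i_m\}$ smaller than $i_t$ number $(i_t-1)-(t-1)=i_t-t$), and there are no inversions within the sorted first block.
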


\begin{proof}
We prove this by induction on $n$. The case of $n=1$ is obvious.
Suppose the lemma is true for $n-1$, then
\begin{align*}
&\sum_{i_2<\cdots<i_m}(-q)^{\sum_{t=2}^{m}(i_t-t)}[1,i_2,\cdots,i_m]_m[2,3,\cdots,\hat{i_2},\cdots\hat{i_m},\cdots,mn]_m\\
=&\sum_{i_2<\cdots<i_m}(-q)^{\sum_{t=2}^{m}(i_t-t)}b_{1,i_2,\cdots,i_m}
\sum_{\pi\in\Pi(N\backslash\{1,i_2,\cdots,i_m \})}(-q)^{l(\pi)}b_{\pi}\\
=&\sum_{i_2<\cdots<i_m}\sum_{\pi_1\in\Pi(N\backslash\{1,i_2,\cdots,i_m \})}(-q)^{{\sum_{t=2}^{m}(i_t-t)}+l(\pi_1)}[1,i_2,\cdots,i_m]_m\cdots[i_n^1,\cdots,i_n^m]_m\\
=&\sum_{\pi \in\Pi}(-q)^{l(\pi)}[i_1^1,i_1^2,\cdots,i_1^m]_m[i_2^1,i_2^2,\cdots,i_2^m]_m\cdots[i_n^1,i_n^2,\cdots,i_n^m]_m\\
=&\sum_{\pi \in\Pi}(-q)^{l(\pi)}b_{\pi}.
\end{align*}
\end{proof}

\begin{lemma}
\begin{align*}
&\sum_{i_1<\cdots<i_m}(-q)^{\sum_{t=1}^{m}(i_t-t)}[i_1,\cdots,i_m]_m[1,\cdots,\hat{i_1},\cdots,\hat{i_m},\cdots ,mn]_m\\
=&(\sum_{i=0}^{n-1}(-q)^{m^2i})[1,2,\cdots,mn]_m.
\end{align*}
\end{lemma}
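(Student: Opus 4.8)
The plan is to induct on $n$, following the pattern of the proof of Lemma~\ref{L:Ph2a} but with pairs replaced by $m$-blocks and the quadratic Pl\"ucker relation replaced by the defining relation \eqref{relationb} of $B$. The base case $n=1$ is trivial: the only admissible tuple is $i_t=t$, its exponent $\sum_t(i_t-t)$ vanishes, the co-hyper-Pfaffian is the empty product, and the right-hand side collapses to $[1,2,\cdots,m]_m$. For the inductive step I would split the sum over $\{i_1<\cdots<i_m\}$ into the part with $i_1=1$ and the part with $i_1>1$.

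The $i_1=1$ part is, term by term, exactly the right-hand side of the inductive definition \eqref{e:hyperPh}, hence equals $[1,2,\cdots,mn]_m$. For the $i_1>1$ part, where $1\notin I:=\{i_1,\cdots,i_m\}$, I would expand the co-hyper-Pfaffian $[1,\cdots,\hat{i_1},\cdots,\hat{i_m},\cdots,mn]_m$ by the preceding Proposition, isolating in each partition the block $K=\{1<k_2<\cdots<k_m\}$ containing $1$ and writing the remaining blocks as a partition $\rho'$ of $N\setminus(I\cup K)$. A length bookkeeping then shows that the coefficient of the monomial $b_I\,b_K\,b_{\rho'}$ equals $(-q)^{l(\mu)}$, where $\mu$ is the ordered block-partition $(I,K,\rho')$ of $N$; so the $i_1>1$ part equals $\sum_\mu(-q)^{l(\mu)}b_\mu$ over all such $\mu$.

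Next I would group these $\mu$ by the $2m$-set $T:=I\cup K$ and by $\rho'$. For fixed $T,\rho'$ the remaining freedom is the choice of $I\subset T$ with $1\notin I$, i.e. the choice of $\pi_1\in\Pi(T)$, and $b_Ib_K$ is precisely the reversed product on the right of \eqref{relationb} for $S=T$. Since the inversion count between two internally sorted $m$-blocks of $T$ is $m^2-l(\pi_1)$, while the remaining inversions of $\mu$ depend only on $T$ and on $\rho'$, the group sum equals $(-q)^{m^2+c(T,\rho')}\big(\sum_{\pi_1\in\Pi(T)}(-q)^{-l(\pi_1)}b_Ib_K\big)b_{\rho'}$ for a suitable exponent $c(T,\rho')$; relation \eqref{relationb} replaces the bracket by $\sum_{\pi_1}(-q)^{l(\pi_1)}b_Kb_I$, and undoing the same bookkeeping collapses everything to $(-q)^{m^2}\sum_\nu(-q)^{l(\nu)}b_\nu$, now summed over ordered block-partitions $\nu$ of $N$ whose first block contains $1$. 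Extracting that first block $K$ and applying the induction hypothesis to the inner sum over $N\setminus K$ (which is literally the left-hand side of the Lemma for $n-1$) gives $(-q)^{m^2}\big(\sum_{i=0}^{n-2}(-q)^{m^2i}\big)[1,\cdots,mn]_m$ for the $i_1>1$ part, after re-summing over $K$ via \eqref{e:hyperPh}. Adding the $i_1=1$ contribution $[1,\cdots,mn]_m$ yields $\big(\sum_{i=0}^{n-1}(-q)^{m^2i}\big)[1,\cdots,mn]_m$.

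The step I expect to be the main obstacle is the length bookkeeping, which is more delicate than in the pair case because each block carries $m$ entries. It reduces to three facts about permutation lengths: (i) moving a sorted $m$-block to the front of a block-partition adds to $l$ the quantity $\sum_t(\mathrm{pos}(k_t)-t)$, $\mathrm{pos}$ being the position within the underlying set; (ii) concatenating a partition of $S$ with one of its complement adds the two lengths plus a cross-inversion term depending only on $S$; and (iii) transposing two sorted $m$-blocks changes $l$ by exactly $m^2$. Granting (i)--(iii), the remainder is a re-indexing of sums together with a single application of \eqref{relationb}, exactly as in Lemma~\ref{L:Ph2a}; in particular the modular hypothesis on $q$ is not actually invoked in this lemma (it is needed only in the subsequent hyper-Pfaffian/hyper-determinant comparison).
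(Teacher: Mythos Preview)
Your proposal is correct and follows essentially the same route as the paper's own argument: induction on $n$, the split into the $i_1=1$ part (which recovers the defining recursion \eqref{e:hyperPh}) and the $i_1>1$ part, the regrouping by the $2m$-set $T=I\cup K$ so that relation \eqref{relationb} can be applied to swap $b_Ib_K$ for $b_Kb_I$, and finally the induction hypothesis on $N\setminus K$ re-summed via \eqref{e:hyperPh}. The paper takes $n=2$ as base case rather than your $n=1$, and its bookkeeping is written out via the explicit exponent $m^2-l(\pi_1)+l(\pi_2)+\sum_{i\in S}i-m(2m+1)$ rather than your more structural facts (i)--(iii), but these are cosmetic differences; your remark that the modular hypothesis on $q$ is not used here is also consistent with the paper's proof.
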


\begin{proof} Similarly this is proved by induction on $n$.
The case of $n=2$ is true obviously. Suppose it is true for $n-1$,
\begin{align*}
&\sum_{i_1<i_2<\cdots<i_m}(-q)^{\sum_{t=1}^{m}(i_t-t)}[i_1,i_2,\cdots,i_m]_m[1,\cdots,\hat{i_1},\cdots,\hat{i_m},\cdots ,mn]_m\\
=&\sum_{1=i_1<i_2<\cdots<i_m}(-q)^{\sum_{t=1}^{m}(i_t-t)}[1,i_2,\cdots,i_m]_m[1,\cdots,\hat{i_1},\cdots,\hat{i_m},\cdots ,mn]_m\\
+&\sum_{1<i_1<i_2<\cdots<i_m}(-q)^{\sum_{t=1}^{m}(i_t-t)}[i_1,i_2,\cdots,i_m]_m[1,\cdots,\hat{i_1},\cdots,\hat{i_m},\cdots ,mn]_m.
\end{align*}
By definition we have that
\begin{align*}
&\sum_{1=i_1<\cdots<i_m}(-q)^{\sum_{t=1}^{m}(i_t-t)}[i_1,\cdots,i_m]_m[1,\cdots,\hat{i_1},\cdots,\hat{i_m},\cdots ,mn]_m\\
=&[1,2,\cdots ,mn]_m.
\end{align*}
Let $S$ be a subset of $\{1,2,\cdots,n\}$ such that $|S|=2m$, for any
$$\pi=\{(i_1,i_2,\cdots,i_m), (j_1,j_2,\cdots,j_m)\}\in\Pi(S)',$$
we define the length function $l(\pi)$ to be that of the associated permutation, then
it is easy to see that
$$l((i_1,i_2,\cdots,i_m),(j_1,j_2,\cdots,j_m))=m^2-l((j_1,j_2,\cdots,j_m),(i_1,i_2,\cdots,i_m)).$$
Subsequently one has that
\begin{align*}
&\sum_{1<i_1<\cdots<i_m}(-q)^{\sum_{t=2}^{m}(i_t-t)}[i_1,i_2,\cdots,i_m]_m[1,\cdots,\hat{i_1},\cdots,\hat{i_m},\cdots ,mn]_m\\
=&\sum_{S}\sum_{\pi_1\in\Pi(S)}\sum_{\pi_2\in N\backslash S}(-q)^{m^2-l(\pi_1)+l(\pi_2)+\sum_{i\in S}i-m(2m+1)}
b_{j_1,j_2,\cdots,j_m}b_{i_1,i_2,\cdots,i_m}b_{\pi_2}\\
=&(-q)^{m^2}\sum_{S}\sum_{\pi_1\in\Pi(S)}\sum_{\pi_2\in N\backslash S}(-q)^{l(\pi_1)+l(\pi_2)+\sum_{i\in S}i-m(2m+1)}
b_{\pi_1}b_{\pi_2}\\
=&(-q)^{m^2}\sum_{1=i_1<\cdots<i_m}(-q)^{\sum_{t=1}^m(i_t-t)}b_{i_1,\cdots,i_m}.
\end{align*}
Similarly one also has that
\begin{align*}
&\sum_{\substack{j_1<\cdots<j_m\\i_k\neq j_l}}
\sum_{\pi\in\Pi(N\backslash \{i_1,\cdots,i_m,j_1,\cdots,j_m\})}(-q)^{l((j_1,\cdots,j_m)\pi)}
b_{j_1,j_2,\cdots,j_m}b_{\pi}
&\\
=&(-q)^{m^2}\sum_{1=i_1<\cdots<i_m}(-q)^{\sum_{t=1}^m(i_t-t)}b_{i_1,\cdots,i_m}
\sum_{i=0}^{n-2}(-q)^{m^2i}[2,\cdots,\hat{i_2},\cdots,\hat{i_m},\cdots ,mn]_m\\
=&\sum_{i=1}^{n-1}(-q)^{m^2i}[1,2,\cdots ,mn]_m.
\end{align*}

Thus we conclude that
\begin{align*}
&\sum_{i_1<i_2<\cdots<i_m}(-q)^{\sum_{t=2}^{m}(i_t-t)}[i_1,i_2,\cdots,i_m]_m[1,\cdots,\hat{i_1},\cdots,\hat{i_m},\cdots ,mn]_m\\
=&(\sum_{i=0}^{n-1}(-q)^{m^2i})[1,2,\cdots ,mn]_m.
\end{align*}
\end{proof}

\begin{theorem} We have that
\begin{align*}
&\sum_{\pi \in\Pi'}(-q)^{l(\pi)}[i_1^1,i_1^2,\cdots,i_1^m]_m[i_2^1,i_2^2,\cdots,i_2^m]_m\cdots[i_n^1,i_n^2,\cdots,i_n^m]_m\\
=&({\sum_{\sigma \in S_n} (-q)^{m^2l(\sigma)}})[1,2,\cdots,mn]_m.
\end{align*}
\end{theorem}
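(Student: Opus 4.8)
The plan is to follow the proof of Theorem~\ref{pf1} step by step, arguing by induction on $n$, with the Lemma immediately preceding this statement playing the role that Lemma~\ref{L:Ph2a} played there. The base case $n=1$ is immediate: $\Pi'=\Pi=\{(1,2,\dots,m)\}$ is a single matching of length $0$, $b_\pi=[1,2,\dots,m]_m$, and $\sum_{\sigma\in S_1}(-q)^{m^2l(\sigma)}=1$.

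For the inductive step, I would first reorganize $\sum_{\pi\in\Pi'}(-q)^{l(\pi)}b_\pi$ by splitting off the first $m$-block of each $\pi\in\Pi'$, just as in Theorem~\ref{pf1}. Writing $\pi=\bigl((i_1^1,\dots,i_1^m),(i_2^1,\dots,i_2^m),\dots,(i_n^1,\dots,i_n^m)\bigr)$, the assignment $\pi\mapsto\bigl((i_1^1,\dots,i_1^m),\,((i_2^1,\dots,i_2^m),\dots,(i_n^1,\dots,i_n^m))\bigr)$ is a bijection of $\Pi'$ onto the disjoint union, over all increasing $m$-subsets $\{i_1<\dots<i_m\}\subset N$, of $\Pi(N\setminus\{i_1,\dots,i_m\})'$. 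The one genuinely combinatorial point is the length identity
$$l(\pi)=\sum_{t=1}^{m}(i_t-t)+l(\pi'),$$
where $\pi'$ denotes the truncated list, regarded inside $N\setminus\{i_1,\dots,i_m\}$ after its order-preserving relabelling by $\{1,\dots,m(n-1)\}$. I would prove this by counting inversions of the permutation in $S_{mn}$ attached to $\pi$: positions $1,\dots,m$ carry the increasing values $i_1<\dots<i_m$ and so contribute no inversion among themselves, the value $i_t$ in position $t$ is inverted with precisely the $i_t-t$ values that are both smaller than $i_t$ and located in positions $>m$, and the inversions supported entirely on positions $>m$ reproduce $l(\pi')$ since a monotone relabelling preserves inversions. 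Since $b_\pi=b_{i_1,\dots,i_m}\,b_{\pi'}$, this gives
$$\sum_{\pi\in\Pi'}(-q)^{l(\pi)}b_\pi=\sum_{i_1<\cdots<i_m}(-q)^{\sum_{t=1}^{m}(i_t-t)}[i_1,\dots,i_m]_m\sum_{\pi'\in\Pi(N\setminus\{i_1,\dots,i_m\})'}(-q)^{l(\pi')}b_{\pi'}.$$

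Now I would apply the inductive hypothesis to the inner sum --- this is exactly the identity being proved, for the $m(n-1)$-element totally ordered set $N\setminus\{i_1,\dots,i_m\}$, which is legitimate because everything in sight is unchanged by an order-preserving relabelling (cf.\ Remark~\ref{remark1}) --- obtaining $\bigl(\sum_{\sigma\in S_{n-1}}(-q)^{m^2l(\sigma)}\bigr)[1,\dots,\hat{i_1},\dots,\hat{i_m},\dots,mn]_m$. Pulling the constant $\sum_{\sigma\in S_{n-1}}(-q)^{m^2l(\sigma)}$ (which does not depend on $i_1,\dots,i_m$) out front leaves exactly the left-hand side of the preceding Lemma, so the whole sum becomes
$$\Bigl(\sum_{\sigma\in S_{n-1}}(-q)^{m^2l(\sigma)}\Bigr)\Bigl(\sum_{i=0}^{n-1}(-q)^{m^2i}\Bigr)[1,2,\dots,mn]_m.$$
To finish, I invoke the classical factorisation of the Poincar\'e polynomial of $S_n$: with $t=(-q)^{m^2}$ one has $\sum_{\sigma\in S_n}t^{l(\sigma)}=\bigl(\sum_{\sigma\in S_{n-1}}t^{l(\sigma)}\bigr)(1+t+\cdots+t^{n-1})$, which is precisely the product above, so the sum equals $\bigl(\sum_{\sigma\in S_n}(-q)^{m^2l(\sigma)}\bigr)[1,2,\dots,mn]_m$, completing the induction. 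I expect the only delicate step to be the inversion count in the length identity (together with checking that the inductive hypothesis applies verbatim to an arbitrary $m(n-1)$-subset of $N$); the modular assumption $q^{8\binom{k}{2}}=1$ enters only indirectly, having already been used in the relations~(\ref{relationb}) and hence in the preceding Lemma, which the argument cites as a black box.
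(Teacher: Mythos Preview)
Your proposal is correct and follows essentially the same inductive argument as the paper's own proof: split off the first $m$-block, apply the inductive hypothesis to the remaining $\Pi(N\setminus\{i_1,\dots,i_m\})'$-sum, invoke the preceding Lemma, and finish with the factorisation $\sum_{\sigma\in S_n}t^{l(\sigma)}=(1+t+\cdots+t^{n-1})\sum_{\sigma\in S_{n-1}}t^{l(\sigma)}$. Your write-up is in fact more explicit than the paper's on the inversion-counting length identity and on why the Poincar\'e-polynomial recursion applies; the only cosmetic difference is that you start the induction at $n=1$ rather than $n=2$.
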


\begin{proof} When $n=2$ it is obvious. Suppose it is true for $n-1$, then
\begin{align*}
&\sum_{\pi \in\Pi'}(-q)^{l(\pi)}[i_1^1,i_1^2,\cdots,i_1^m]_m[i_2^1,i_2^2,\cdots,i_2^m]_m\cdots[i_n^1,i_n^2,\cdots,i_n^m]_m\\
=&\sum_{i_1<\cdots<i_m}\sum_{\pi\in\Pi(N\backslash\{i_1,i_2,\cdots,i_m\})'}
(-q)^{l(\pi)+\sum_{t=1}^{m}(i_t-t)}b_{i_1,\cdots, i_m}b_{\pi}\\
=&\sum_{i_1<\cdots<i_m}(-q)^{\sum_{t=1}^{m}(i_t-t)}[i_1,\cdots,i_m]\sum_{\sigma\in S_{n-1}}(-q)^{m^2l(\sigma)}[1,\cdots,\hat{i_1},\cdots,\hat{i_m},\cdots ,mn]_m\\
=&(\sum_{\sigma\in S_{n-1}}(-q)^{m^2l(\sigma)})(\sum_{i=0}^{n-1}(-q)^{m^2i})[1,2,\cdots ,mn]_m\\
=&(\sum_{\sigma\in S_{n}}(-q)^{m^2l(\sigma)})[1,2,\cdots ,mn]_m.
\end{align*}
\end{proof}

The following theorem shows that the quantum hyper-Pfaffian is a volume element.
\begin{theorem}
Let $$\Omega=\sum_{1\leq i_1<i_2<\cdots<i_m\leq 2n}b_{i_1,i_2,\cdots,i_m}(x_{i_1}\wedge x_{i_2}\wedge\cdots\wedge x_{i_m}),$$
then one has
$$\wedge ^n\Omega=({\sum_{\sigma \in S_n} (-q)^{m^2l(\sigma)}})[1,2,\cdots ,mn]_mx_1\wedge x_2\wedge\cdots \wedge x_{mn}.$$
\end{theorem}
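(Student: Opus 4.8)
The plan is to compute $\wedge^n\Omega$ directly by expanding the $m$-fold wedge products and matching the result against the previous theorem. First I would expand
$$\wedge^n\Omega=\bigwedge_{s=1}^{n}\Big(\sum_{1\le i_1<\cdots<i_m\le mn}b_{i_1,\cdots,i_m}\,x_{i_1}\wedge\cdots\wedge x_{i_m}\Big),$$
distributing the product over the sum. Each term is indexed by a choice, for every factor $s=1,\dots,n$, of an increasing $m$-tuple; monomials in which some index $x_j$ occurs twice vanish by the relation $x_j\wedge x_j=0$ (equation~(\ref{qwedge2})), so only those families of $m$-tuples that partition a subset of $\{1,\dots,mn\}$ survive. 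Since the wedge has total degree $mn$, in fact only genuine partitions of the whole set $N=\{1,\dots,mn\}$ into $n$ increasing $m$-blocks contribute, i.e. exactly the elements $\pi\in\Pi'$ (the blocks themselves are ordered internally but the $n$ blocks may appear in any order, which is precisely $\Pi'$ rather than $\Pi$). This gives
$$\wedge^n\Omega=\sum_{\pi\in\Pi'}\,\varepsilon(\pi)\,b_{i_1^1,\cdots,i_1^m}\cdots b_{i_n^1,\cdots,i_n^m}\;x_1\wedge x_2\wedge\cdots\wedge x_{mn},$$
where $\varepsilon(\pi)$ is the scalar produced by reordering $x_{i_1^1}\wedge\cdots\wedge x_{i_n^m}$ into the standard order $x_1\wedge\cdots\wedge x_{mn}$.

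The key computational step is to verify that this reordering scalar is exactly $(-q)^{l(\pi)}$, so that the sum becomes $\sum_{\pi\in\Pi'}(-q)^{l(\pi)}b_\pi$. This follows from the wedge relation~(\ref{qwedge1}), $x_j\wedge x_i=(-q)\,x_i\wedge x_j$ for $i<j$: moving a sequence of distinct letters into increasing order multiplies by $(-q)$ once for each inversion of the associated permutation $\sigma\in S_{mn}$, and by the definition $l(\pi)=l(\sigma)$ given just before Theorem~\ref{T:Ph1} this inversion count is precisely $l(\pi)$. (One should note that the relation $qx_ix_j+x_jx_i=0$ in $\Lambda_n$ is exactly $x_j\wedge x_i = -q\, x_i\wedge x_j$, and that the scalars $b$ commute with the $x$'s by construction of $A\otimes\Lambda$, so they can be freely pulled to the front without extra factors.) Hence
$$\wedge^n\Omega=\Big(\sum_{\pi\in\Pi'}(-q)^{l(\pi)}[i_1^1,\cdots,i_1^m]_m\cdots[i_n^1,\cdots,i_n^m]_m\Big)x_1\wedge\cdots\wedge x_{mn}.$$

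Finally I would invoke the theorem immediately preceding this statement, which identifies $\sum_{\pi\in\Pi'}(-q)^{l(\pi)}[i_1^1,\cdots,i_1^m]_m\cdots[i_n^1,\cdots,i_n^m]_m$ with $\big(\sum_{\sigma\in S_n}(-q)^{m^2 l(\sigma)}\big)[1,2,\cdots,mn]_m$, yielding
$$\wedge^n\Omega=\Big(\sum_{\sigma\in S_n}(-q)^{m^2 l(\sigma)}\Big)[1,2,\cdots,mn]_m\,x_1\wedge x_2\wedge\cdots\wedge x_{mn},$$
which is exactly the claim. The bookkeeping is the only delicate point: I expect the main obstacle to be a clean argument that the only surviving terms in the expansion are indexed by partitions in $\Pi'$ and that the accompanying sign/power is $(-q)^{l(\pi)}$ — this is the $m$-block analogue of the computation already carried out in the proof of the first Proposition of Section~2 for $1$-forms, and it should be handled by the same reordering argument, now applied to tuples of $m$-element blocks rather than single letters. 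Everything else reduces to citing the preceding theorem, and the modular hypothesis $q^{8\binom{k}{2}}=1$ plays no direct role here since it was already used in establishing that preceding theorem.
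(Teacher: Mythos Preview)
Your proposal is correct and follows essentially the same approach as the paper: expand $\wedge^n\Omega$ to obtain $\sum_{\pi\in\Pi'}(-q)^{l(\pi)}b_\pi\,x_1\wedge\cdots\wedge x_{mn}$, then invoke the preceding theorem. The paper's proof consists of precisely these two lines without further justification, so your expanded treatment of why only $\Pi'$-terms survive and why the reordering factor is $(-q)^{l(\pi)}$ simply fills in the details the paper leaves implicit.
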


\begin{proof}
\begin{align*}
\bigwedge ^n\Omega=&\sum_{\pi \in\Pi'}(-q)^{l(\pi)}[i_1^1,i_1^2,\cdots,i_1^m]_m\cdots[i_n^1,i_n^2,\cdots,i_n^m]_mx_1\wedge x_2\wedge\cdots \wedge x_{mn}\\
=&({\sum_{\sigma \in S_n} (-q)^{m^2l(\sigma)}})[1,2,\cdots,mn]_mx_1\wedge x_2\wedge\cdots \wedge x_{mn}.
\end{align*}
\end{proof}

Similar to the case of Pfaffians, a $mn$-th determinant can be expressed as $[1,2,\cdots,mn]_m$, this is proved by the following theorem.
\begin{theorem}\label{qhp}
If  $[i_1,i_2,\cdots,i_m]_m=b_{i_1,i_2,\cdots,i_m}=\sum_{i=1}^n\xi^{m(i-1)+1,\cdots,mi}_{i_1,\cdots,i_m}$,then
$$[1,2,\cdots ,mn]_m={\det}_q(a_{ij})_{1\leq i,j\leq mn}.$$
\end{theorem}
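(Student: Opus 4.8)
The plan is to mimic, block by block, the argument given above for the even case ${\Pf}_q(B)={\det}_q(A)$, now using blocks of $m$ consecutive rows. Write $R_s=\{m(s-1)+1,\dots,ms\}$ for the $s$-th block of $m$ rows of $A$, so that by hypothesis $b_{i_1,\dots,i_m}=\sum_{s=1}^{n}\xi^{R_s}_{i_1,\dots,i_m}$; put $\omega_i=\sum_{j=1}^{mn}a_{ij}x_j$, $\Theta_s=\omega_{m(s-1)+1}\wedge\cdots\wedge\omega_{ms}$, and $\Omega=\sum_{s=1}^{n}\Theta_s$. Expanding each $\Theta_s$ in the $x$-basis and using $x_i\wedge x_i=0$ gives $\Theta_s=\sum_{i_1<\cdots<i_m}\xi^{R_s}_{i_1,\dots,i_m}\,x_{i_1}\wedge\cdots\wedge x_{i_m}$, hence $\Omega=\sum_{i_1<\cdots<i_m}b_{i_1,\dots,i_m}\,x_{i_1}\wedge\cdots\wedge x_{i_m}$, which is exactly the $m$-form appearing in the volume-element theorem for the quantum hyper-Pfaffian, specialized to this choice of the $b$'s.

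The decisive step is to verify that these $b_{i_1,\dots,i_m}$ satisfy the defining relations $(\ref{relationb})$ of $B$. Fix a $2m$-subset $S\subset\{1,\dots,mn\}$, substitute $b_K=\sum_s\xi^{R_s}_K$ into both sides of $(\ref{relationb})$, and split the resulting double sum over block indices into its diagonal part ($s=t$) and its off-diagonal part ($s\neq t$). For the diagonal part, for each fixed $s$ the contribution is a sum of the form $\sum_{\pi\in\Pi(S)}(-q)^{\pm l(\pi)}\xi^{R_s}_{I_\pi}\xi^{R_s}_{J_\pi}$ of products of two $m$-minors of the single submatrix with rows $R_s$ and columns $S$; by Corollary $\ref{thp2}$ (both of its displayed identities) together with Remark $\ref{remark1}$ — applied with ``$n$'' equal to $m$, with $\{1,\dots,n\}$ replaced by $R_s$ and $\{1,\dots,2n\}$ replaced by $S$, which is legitimate because the relations $(\ref{relation a1})$--$(\ref{relation a4})$ depend only on the relative order of the indices — every such diagonal sum vanishes. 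For the off-diagonal part, group $(s,t)$ with $(t,s)$ for $s<t$; Proposition $\ref{thp3}$, applied with ``$n$'' equal to $m$, row-blocks $R_t$ and $R_s$, column set $S$, and $r=1$ (so that the single index $\min S$ is pinned to the first block, which is precisely the range of $\Pi(S)$), rewrites each wrong-order product $\xi^{R_t}_{I_\pi}\xi^{R_s}_{J_\pi}$ as $(-q)^{m^2-l(\pi)}\xi^{R_s}_{J_\pi}\xi^{R_t}_{I_\pi}$ up to the overall factor $(-q)^{m^2-2m}$, and the reversed-order counterparts (obtained the same way by taking the wedge products in the opposite order) handle the other side of $(\ref{relationb})$. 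Combining this with the $m{+}m$ Laplace expansion of the $2m$-row quantum minor $\xi^{R_s\cup R_t}_S$ — a purely formal reordering of the $x$'s, just as in the $m=2$ computation above — both sides of $(\ref{relationb})$ collapse to $\sum_{s<t}\xi^{R_s\cup R_t}_S$. The one place where this forces the standing hypothesis is the overall factor $(-q)^{m^2-2m}=(-q)^{m(m-2)}$ produced by Proposition $\ref{thp3}$ with $r=1$: since $m$ is even this equals $q^{m(m-2)}=q^{4k(k-1)}=q^{8\binom{k}{2}}$, which is $1$ precisely by the modular assumption (and is automatically $1$ when $m=2$, recovering the unconditional Pfaffian identity). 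Tracking the powers of $-q$ through the Pl\"ucker relations (Corollary $\ref{thp2}$), the crossing relations (Proposition $\ref{thp3}$), and this Laplace expansion, and checking that the reconciliation needed is exactly $(-q)^{m(m-2)}=1$, is the main and most delicate part of the proof.

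With $(\ref{relationb})$ verified, I would compute $\bigwedge^n\Omega$ in two ways. First, expanding $\bigwedge^n\Omega=\bigwedge^n(\sum_s\Theta_s)$: every term with a repeated block vanishes (it contains some $\omega_i\wedge\omega_i=0$), and interchanging two distinct blocks $\Theta_a$ and $\Theta_b$ costs $(-q)^{m^2}$ (each of the $m$ later $\omega$'s passes each of the $m$ earlier ones via $\omega_j\wedge\omega_i=(-q)\,\omega_i\wedge\omega_j$), so $\bigwedge^n\Omega=\big(\sum_{\sigma\in S_n}(-q)^{m^2l(\sigma)}\big)\Theta_1\wedge\cdots\wedge\Theta_n=\big(\sum_{\sigma\in S_n}(-q)^{m^2l(\sigma)}\big)\omega_1\wedge\cdots\wedge\omega_{mn}$, which equals $\big(\sum_{\sigma\in S_n}(-q)^{m^2l(\sigma)}\big){\det}_q(a_{ij})\,x_1\wedge\cdots\wedge x_{mn}$ by the proposition identifying $\omega_1\wedge\cdots\wedge\omega_{mn}$ with the quantum determinant. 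Second, since the $b$'s now satisfy $(\ref{relationb})$, the volume-element theorem for the quantum hyper-Pfaffian gives $\bigwedge^n\Omega=\big(\sum_{\sigma\in S_n}(-q)^{m^2l(\sigma)}\big)[1,2,\cdots,mn]_m\,x_1\wedge\cdots\wedge x_{mn}$.

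Comparing the two expressions for $\bigwedge^n\Omega$ and cancelling the common scalar $\sum_{\sigma\in S_n}(-q)^{m^2l(\sigma)}=[n]_{q^{m^2}}!$ (nonzero for all but finitely many admissible $q$, exactly as in the even case) yields ${\det}_q(a_{ij})=[1,2,\cdots,mn]_m$. I expect no difficulty beyond the $m=2$ argument except in the block where the modular hypothesis $q^{8\binom{k}{2}}=1$ enters, namely the verification of $(\ref{relationb})$ in the second step.
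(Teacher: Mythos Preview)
Your proposal is correct and takes essentially the same approach as the paper: define $\Omega=\sum_s\Theta_s$, verify that the $b$'s satisfy relation (\ref{relationb}) by splitting into diagonal terms (which vanish by Proposition~\ref{thp1}/Corollary~\ref{thp2}) and off-diagonal terms (handled by Proposition~\ref{thp3} with $r=1$, which is exactly where the modular hypothesis $(-q)^{m^2-2m}=q^{8\binom{k}{2}}=1$ enters), then compute $\bigwedge^n\Omega$ two ways and cancel the common scalar $\sum_{\sigma\in S_n}(-q)^{m^2l(\sigma)}$. The paper's proof is identical in structure and detail, including the reduction of both sides of (\ref{relationb}) to $\sum_{s<t}\xi^{R_s\cup R_t}_S$.
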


\begin{proof}
We consider the special $m$-form
$$\Omega=\omega_1\wedge\omega_2\wedge\cdots\wedge\omega_m+\omega_{m+1}\wedge\omega_{m+2}\wedge\cdots\wedge\omega_{2m}+\cdots+\omega_{mn-(m-1)}\wedge\cdots\wedge\omega_{mn},$$
where $\omega_i=\Sigma_{j=1}^{mn} a_{ij}x_j,(i =1,2, . . . , mn)$.

For this $\Omega$,
\begin{equation}\label{qhpo1}
\begin{split}
\bigwedge^n\Omega=&(\sum_{\sigma\in S_n}(-q)^{m^2l(\sigma)})\omega_1\wedge\omega_2\wedge\cdots\wedge\omega_{mn}\\
=&(\sum_{\sigma\in S_n}(-q)^{m^2l(\sigma)})\mbox{det}_q(a_{ij})x_1\wedge x_2\wedge\cdots \wedge x_{3n}.\\
\end{split}
\end{equation}

On the other hand,
$$\Omega=\sum_{i_1<i_2<\cdots<i_m}b_{i_1,i_2,\cdots,i_m}x_{i_1}\wedge x_{i_2}\wedge\cdots\wedge x_{i_m},$$
where $b_{i_1,i_2,\cdots,i_m}=\sum_{i=1}^n\xi^{m(i-1)+1,\cdots,mi}_{i_1,\cdots,i_m}.$

In order to prove the theorem we have to check that $b_{i_1,i_2,\cdots,i_m}$ satisfy the relation (\ref{relationb}).
Since Propositions (\ref{thp1}) and (\ref{thp3}) are independent from rows and columns, we only need to show that
Eq. (\ref{relationb})
holds for $S=\{1,2,\cdots,2m\}$.

For two ordered $m$-element subsets
\begin{align*}
\pi&=\{(i_1,\cdots,i_m), (j_1,\cdots,j_m)\}\in\Pi(S), \\
\pi_1&=\{(j_1,\cdots,j_m), (i_1,\cdots,i_m)\},
\end{align*}
we already knew that $l(\pi_1)=m^2-l(\pi).$
Note that $m=2k$,$q^{8(^k_2)}=1$. $m^2-2m=4k(k-1)=8(^k_2)$, then $(-q)^{(m^2-2m)}=1$.
From Proposition \ref{thp1}--\ref{thp3}, we have the following identities:
\begin{equation*}
\begin{split}
\sum_{\pi\in\Pi(S)}(-q)^{l(\sigma)}\xi^{m+1,\cdots,2m}_{i_1,\cdots,i_m}\xi^{1,\cdots,m}_{j_{1},\cdots,j_{m}}
=\sum_{\pi\in\Pi(S)}(-q)^{m^2-l(\sigma)}\xi^{1,\cdots,m}_{j_{1},\cdots,j_{m}}\xi^{m+1,\cdots,2m}_{i_1,\cdots,i_m}.
\end{split}
\end{equation*}
\begin{align}
&\sum_{1=i_1<i_2<\cdots<i_m}(-q)^{l(\sigma)}
\xi^{1,\cdots,m}_{i_1,\cdots,i_m}\xi^{1,\cdots,m}_{i_{n+1},\cdots,i_{2m}}=0, \\
&\sum_{1=i_1<i_2<\cdots<i_m}(-q)^{-l(\sigma)}
\xi^{1,\cdots,m}_{i_{n+1},\cdots,i_{2m}}\xi^{1,\cdots,m}_{i_1,\cdots,i_m}=0.
\end{align}
Then we have
\begin{equation*}
\begin{split}
&\sum_{\pi\in\Pi(S)}(-q)^{l(\pi)}b_{i_1,i_2,\cdots,i_m}b_{j_1,j_2,\cdots,j_m}\\
=&\sum_{\pi\in\Pi(S)}(-q)^{l(\pi)}\sum_{i=1}^n\xi^{m(i-1)+1,\cdots,mi}_{i_1,\cdots,i_m}\sum_{i=1}^n\xi^{m(i-1)+1,\cdots,mi}_{j_1,\cdots,j_m}\\
=&\sum_{\pi\in\Pi(S)}(-q)^{l(\pi)}\sum_{s<t}
(\xi^{m(s-1)+1,\cdots,ms}_{i_1,\cdots,i_m}\xi^{m(t-1)+1,\cdots,mt}_{j_1,\cdots,j_m}+\xi^{m(t-1)+1,\cdots,mt}_{i_1,\cdots,i_m}\xi^{m(s-1)+1,\cdots,ms}_{j_1,\cdots,j_m})\\
=&\sum_{s<t}\sum_{\pi\in\Pi(S)}(-q)^{l(\pi)}
(\xi^{m(s-1)+1,\cdots,ms}_{i_1,\cdots,i_m}\xi^{m(t-1)+1,\cdots,mt}_{j_1,\cdots,j_m}+\xi^{m(t-1)+1,\cdots,mt}_{i_1,\cdots,i_m}\xi^{m(s-1)+1,\cdots,ms}_{j_1,\cdots,j_m})\\
=&\sum_{s<t}\sum_{\pi\in\Pi(S)}(-q)^{l(\pi)}
\xi^{m(s-1)+1,\cdots,ms}_{i_1,\cdots,i_m}\xi^{m(t-1)+1,\cdots,mt}_{j_1,\cdots,j_m}\\
&+\sum_{s<t}\sum_{\pi\in\Pi(S)}(-q)^{m^2-l(\pi)}
\xi^{m(s-1)+1,\cdots,ms}_{j_1,\cdots,j_m}\xi^{m(t-1)+1,\cdots,mt}_{i_1,\cdots,i_m}\\
=&\sum_{s<t}\xi^{m(s-1)+1,\cdots,ms,m(t-1),\cdots,mt}_{1,2,\cdots\cdots,2m}.
\end{split}
\end{equation*}
On the other hand we also have
\begin{align*}
&\sum_{\pi\in\Pi(S)}(-q)^{-l(\pi)}b_{j_1,j_2,\cdots,j_m}b_{i_1,i_2,\cdots,i_m}\\
=&\sum_{\pi\in\Pi(S)}(-q)^{-l(\pi)}\sum_{i=1}^n\xi^{m(i-1)+1,\cdots,mi}_{j_1,\cdots,j_m}\sum_{i=1}^n\xi^{m(i-1)+1,\cdots,mi}_{i_1,\cdots,i_m}\\
=&\sum_{\pi\in\Pi(S)}(-q)^{-l(\pi)}\sum_{s<t}
(\xi^{m(t-1)+1,\cdots,mt}_{j_1,\cdots,j_m}\xi^{m(s-1)+1,\cdots,ms}_{i_1,\cdots,i_m}+\xi^{m(s-1)+1,\cdots,ms}_{j_1,\cdots,j_m}\xi^{m(t-1)+1,\cdots,mt}_{i_1,\cdots,i_m})\\
=&\sum_{s<t}\sum_{\pi\in\Pi(S)}(-q)^{-l(\pi)}
(\xi^{m(t-1)+1,\cdots,mt}_{j_1,\cdots,j_m}\xi^{m(s-1)+1,\cdots,ms}_{i_1,\cdots,i_m}+\xi^{m(s-1)+1,\cdots,ms}_{j_1,\cdots,j_m}\xi^{m(t-1)+1,\cdots,mt}_{i_1,\cdots,i_m})\\
=&(-q)^{-m^2}\sum_{s<t}\sum_{\pi\in\Pi(S)}(-q)^{m^2-l(\pi)}
\xi^{m(t-1)+1,\cdots,mt}_{j_1,\cdots,j_m}\xi^{m(s-1)+1,\cdots,ms}_{i_1,\cdots,i_m}\\
&+(-q)^{-m^2}\sum_{s<t}\sum_{\pi\in\Pi(S)}(-q)^{m^2-l(\pi)}
\xi^{m(s-1)+1,\cdots,ms}_{j_1,\cdots,j_m}\xi^{m(t-1)+1,\cdots,mt}_{i_1,\cdots,i_m}\\
=&(-q)^{-m^2}\sum_{s<t}\sum_{\pi\in\Pi(S)}(-q)^{m^2-l(\pi)}
\xi^{m(t-1)+1,\cdots,mt}_{j_1,\cdots,j_m}\xi^{m(s-1)+1,\cdots,ms}_{i_1,\cdots,i_m}\\
&+(-q)^{-m^2}\sum_{s<t}\sum_{\pi\in\Pi(S)}(-q)^{l(\pi)}
\xi^{m(t-1)+1,\cdots,mt}_{i_1,\cdots,i_m}\xi^{m(s-1)+1,\cdots,ms}_{j_1,\cdots,j_m}\\
=&\sum_{s<t}\xi^{m(s-1)+1,\cdots,ms,m(t-1),\cdots,mt}_{1,2,\cdots\cdots,2m}.
\end{align*}

So we obtain that
$$\sum_{\pi\in\Pi(S)}(-q)^{l(\pi)}b_{i_1,i_2,\cdots,i_m}b_{j_1,j_2,\cdots,j_m}
=\sum_{\pi\in\Pi(S)}(-q)^{-l(\pi)}b_{j_1,j_2,\cdots,j_m}b_{i_1,i_2,\cdots,i_m}.$$
Therefore we have that
\begin{equation}\label{qhpo2}
\bigwedge ^n\Omega=({\sum_{\sigma \in S_n} (-q)^{m^2l(\sigma)}})[1,2,\cdots ,mn]_mx_1\wedge x_2\wedge\cdots \wedge x_{mn}.
\end{equation}
Comparing Eq. (\ref{qhpo1}) and Eq. (\ref{qhpo2}), we deduce that
$$[1,2,\cdots ,mn]_m=\mbox{det}_q(a_{ij})_{1\leq i,j\leq mn}.$$
\end{proof}

The theorem also implies that any $mn$th-order $q$-determinant can be expressed as an $n$-th order $q$-hyper-Pfaffian.

\begin{corollary} Any $(mn+l)$-order $q$-determinant can also be expressed as a $q$-hyper-Pfaffian:
$${\det}_q(a_{ij})=[1,2,\cdots,m(n+1)-1,m(n+1)]_m$$
where $1\leq l\leq m-1$. $a_{mn+i,j}=a_{j,mn+i}=0$, if $i>l$ and $mn+i\neq j$, $a_{mn+i,mn+i}=1$ if $i>l$

$$[i_1,i_2,\cdots,i_m]=b_{i_1,i_2,\cdots,i_m}=\sum_{i=1}^{n+1}\xi^{m(i-1)+1,\cdots,mi}_{i_1,\cdots,i_m}$$
that is,a $(mn+l)$-order $q$-determinant can be expressed as a $(n+1)$th-order generalized $q$-Pfaffian.
\end{corollary}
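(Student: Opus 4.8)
The plan is to reduce the statement to Theorem \ref{qhp} by the same padding device that was used for the $m=2$ case in the corollary following the identity $\Pf_q(B)={\det}_q(A)$. Starting from the $(mn+l)\times(mn+l)$ matrix $A=(a_{ij})$, I would enlarge it to a matrix $\tilde A=(\tilde a_{pq})_{1\le p,q\le m(n+1)}$ of order $m(n+1)$ by setting $\tilde a_{pq}=a_{pq}$ for $1\le p,q\le mn+l$, $\tilde a_{mn+i,mn+i}=1$ for $l<i\le m$, and $\tilde a_{pq}=0$ for every remaining pair (those with exactly one index in $\{mn+l+1,\dots,m(n+1)\}$, or with both indices in that set but distinct). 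Since $m=2k$ and $q^{8\binom{k}{2}}=1$ we remain inside the standing hypothesis of the section, so Theorem \ref{qhp} will be applicable to $\tilde A$ once we know $\tilde A\in M_q(m(n+1))$.

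First I would check that the family $\{\tilde a_{pq}\}$ still obeys the defining relations (\ref{relation a1})--(\ref{relation a4}). This is immediate: the newly adjoined generators are the scalars $0$ and $1$, hence central; in each of the four quadratic relations every monomial involving a padding generator is either identically $0$ on both sides or collapses to the corresponding relation among the $a_{ij}$, which holds by assumption. Thus $\tilde A$ is a quantum matrix of order $m(n+1)$, and Theorem \ref{qhp} gives
$$
{\det}_q(\tilde A)=[1,2,\dots,m(n+1)]_m,\qquad [i_1,\dots,i_m]_m=\sum_{i=1}^{n+1}\xi^{m(i-1)+1,\dots,mi}_{i_1,\dots,i_m},
$$
with the quantum minors $\xi$ formed from $\tilde A$. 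Next I would establish that ${\det}_q(\tilde A)={\det}_q(A)$ using the exterior-algebra description of the quantum determinant. Set $\omega_p=\sum_{j=1}^{m(n+1)}\tilde a_{pj}x_j$. For a padding index $p=mn+i$ with $l<i\le m$ one has simply $\omega_p=x_{mn+i}$, while for a genuine index $p\le mn+l$ the padding columns vanish, so $\omega_p=\sum_{j=1}^{mn+l}a_{pj}x_j$ involves only $x_1,\dots,x_{mn+l}$. Consequently
$$
\omega_1\wedge\cdots\wedge\omega_{m(n+1)}=\big(\omega_1\wedge\cdots\wedge\omega_{mn+l}\big)\wedge x_{mn+l+1}\wedge\cdots\wedge x_{m(n+1)},
$$
and by the basic identity $\omega_1\wedge\cdots\wedge\omega_N={\det}_q(a_{ij})\,x_1\wedge\cdots\wedge x_N$ of Section 2, applied to the honest matrix $A$, the first factor equals ${\det}_q(A)\,x_1\wedge\cdots\wedge x_{mn+l}$. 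Hence $\omega_1\wedge\cdots\wedge\omega_{m(n+1)}={\det}_q(A)\,x_1\wedge\cdots\wedge x_{m(n+1)}$, that is, ${\det}_q(\tilde A)={\det}_q(A)$. Combining this with the displayed consequence of Theorem \ref{qhp} yields ${\det}_q(a_{ij})=[1,2,\dots,m(n+1)]_m$ with the $b_{i_1,\dots,i_m}$ exactly as in the statement, which is the assertion.

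The only genuine bookkeeping occurs in the first step, where one must confirm that introducing the scalar rows and columns does not disturb the relations of $M_q(m(n+1))$; but because every new generator is central and almost every monomial in sight vanishes, this is entirely routine. The substantive content — the passage from the quantum determinant of order $m(n+1)$ to the $(n+1)$-st order quantum hyper-Pfaffian, including the use of the modular condition $q^{8\binom{k}{2}}=1$ — is already supplied by Theorem \ref{qhp}.
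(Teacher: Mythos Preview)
Your proposal is correct and follows essentially the same approach as the paper: pad the $(mn+l)\times(mn+l)$ matrix to size $m(n+1)$ with an identity block, observe that the padded entries still satisfy the relations of $M_q(m(n+1))$, and deduce the result from Theorem~\ref{qhp}. The paper gives no separate proof of this corollary; the analogous earlier corollary for $m=2$ is proved by the one-line remark that the padded entries satisfy the quantum-matrix relations, so the result follows from the preceding theorem --- your argument simply spells out the details (including the exterior-algebra verification that ${\det}_q(\tilde A)={\det}_q(A)$) that the paper leaves implicit.
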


\begin{theorem}
If we let
$$[i_1,i_2,\cdots,i_m]_m=b_{i_1,i_2,\cdots,i_m}=\sum_{i=1}^{n}\xi_{m(i-1)+1,\cdots,mi}^{i_1,\cdots,i_m},
$$
we also have the following identity
$$[1,2,\cdots ,mn]_m={\det}_q(a_{ij})_{1\leq i,j\leq mn}$$
\end{theorem}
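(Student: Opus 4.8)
The plan is to run the argument of Theorem \ref{qhp} with the roles of rows and columns interchanged, exactly as the column identity $\Pf_q(B')={\det}_q(A)$ was deduced from the row identity $\Pf_q(B)={\det}_q(A)$ earlier in this section. Concretely, I would introduce the column $1$-forms $\omega_i'=\sum_{j=1}^{mn}a_{ji}x_j$ and form the $m$-form
$$\Omega=\omega_1'\wedge\cdots\wedge\omega_m'+\omega_{m+1}'\wedge\cdots\wedge\omega_{2m}'+\cdots+\omega_{mn-(m-1)}'\wedge\cdots\wedge\omega_{mn}'.$$
Since $\omega_i'\wedge\omega_i'=0$ and $\omega_j'\wedge\omega_i'=(-q)\,\omega_i'\wedge\omega_j'$ for $i<j$, the $n$ blocks $\eta_k=\omega_{m(k-1)+1}'\wedge\cdots\wedge\omega_{mk}'$ satisfy $\eta_k\wedge\eta_k=0$ and $\eta_l\wedge\eta_k=(-q)^{m^2}\eta_k\wedge\eta_l$ for $k<l$, so expanding $\bigwedge^n\Omega=\bigwedge^n(\eta_1+\cdots+\eta_n)$ collapses the mixed terms and leaves
$$\bigwedge^n\Omega=\Big(\sum_{\sigma\in S_n}(-q)^{m^2l(\sigma)}\Big)\,\omega_1'\wedge\cdots\wedge\omega_{mn}'=\Big(\sum_{\sigma\in S_n}(-q)^{m^2l(\sigma)}\Big){\det}_q(a_{ij})\,x_1\wedge\cdots\wedge x_{mn},$$
the last equality being the $\omega_i'$-version of the $q$-determinant volume formula established in Section 2.

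For the second evaluation, expanding each $\eta_k$ in the monomial basis of $\Lambda$ gives $\Omega=\sum_{i_1<\cdots<i_m}b_{i_1,\cdots,i_m}\,x_{i_1}\wedge\cdots\wedge x_{i_m}$ with $b_{i_1,\cdots,i_m}=\sum_{i=1}^{n}\xi_{m(i-1)+1,\cdots,mi}^{i_1,\cdots,i_m}$, which are exactly the elements in the statement. The essential point is then to check that these $b$'s satisfy the defining relation (\ref{relationb}) of $B$ for every $2m$-subset $S$ of $\{1,\cdots,mn\}$. By Remark \ref{remark1}, Propositions \ref{thp1} and \ref{thp3} remain valid after replacing each minor $\xi^I_J$ by its transpose $\xi^J_I$ and after relabelling $\{1,\cdots,2m\}$ by an arbitrary $2m$-subset; feeding these transposed Pl\"ucker and commutation identities into the same bookkeeping as in the proof of Theorem \ref{qhp} --- and using $m=2k$ together with $q^{8\binom{k}{2}}=1$ to get $(-q)^{m^2-2m}=1$ --- one obtains $\sum_{\pi\in\Pi(S)}(-q)^{l(\pi)}b_{i_1,\cdots,i_m}b_{j_1,\cdots,j_m}=\sum_{\pi\in\Pi(S)}(-q)^{-l(\pi)}b_{j_1,\cdots,j_m}b_{i_1,\cdots,i_m}$. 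Granting this, the quantum hyper-Pfaffian volume theorem applies to $\Omega$ and yields $\bigwedge^n\Omega=\big(\sum_{\sigma\in S_n}(-q)^{m^2l(\sigma)}\big)[1,2,\cdots,mn]_m\,x_1\wedge\cdots\wedge x_{mn}$; comparing the two evaluations of $\bigwedge^n\Omega$ and removing the common scalar gives $[1,2,\cdots,mn]_m={\det}_q(a_{ij})$.

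The only substantive step is the verification of (\ref{relationb}); everything else is formal manipulation of wedge products. I expect no essential difficulty there: the transposed minors $\xi^J_I$ obey the same quadratic identities as the $\xi^I_J$ because the defining relations (\ref{relation a1})--(\ref{relation a4}) of $M_q(mn)$ are stable under the transpose anti-automorphism $a_{ij}\mapsto a_{ji}$, so the computation is essentially a line-by-line transcription of the one in the proof of Theorem \ref{qhp}, with the modular hypothesis $q^{8\binom{k}{2}}=1$ entering at precisely the same place.
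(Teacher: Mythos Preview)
Your proposal is correct and matches the paper's own argument essentially line for line: the paper's proof simply says the argument is parallel to Theorem~\ref{qhp} using the column forms, with Remark~\ref{remark1} supplying the transposed Pl\"ucker identities needed to verify relation~(\ref{relationb}). Your write-up is in fact more explicit than the paper's two-sentence proof, but the strategy, the key lemma, and the use of the modular hypothesis $q^{8\binom{k}{2}}=1$ are identical.
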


\begin{proof}
This proof is similar to that of Theorem \ref{qhp}. One just uses Remark (\ref{remark1}) to show that
$b_{i_1,i_2,\cdots,i_m}=\sum_{i=1}^{n}\xi_{m(i-1)+1,\cdots,mi}^{i_1,\cdots,i_m}$
satisfy relation (\ref{relationb}). Thus the theorem is proved.
\end{proof}

\begin{corollary} An $(mn+l)$-order $q$-determinant also has the $q$-Pfaffian representation.
$${\det}_q(a_{ij})=[1,2,\cdots,m(n+1)-1,m(n+1)]_m$$
where $1\leq l\leq m-1$. Here $a_{mn+i,j}=a_{j,mn+i}=0$, if $i>l$ and $mn+i\neq j$, $a_{mn+i,mn+i}=1$ if $i>l$
and
$$[i_1,i_2,\cdots,i_m]=b_{i_1,i_2,\cdots,i_m}=\sum_{i=1}^{n+1}\xi_{m(i-1)+1,\cdots,mi}^{i_1,\cdots,i_m}.$$
\end{corollary}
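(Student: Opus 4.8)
The plan is to reduce the statement to the already–established divisible case by padding $A$ with trivial rows and columns, and then to invoke the theorem immediately preceding this corollary (the column analogue of Theorem~\ref{qhp}). Given the $(mn+l)\times(mn+l)$ quantum matrix $A=(a_{ij})$ with $1\le l\le m-1$, I would form the $m(n+1)\times m(n+1)$ matrix $\tilde A$ by adjoining $m-l$ new rows and columns indexed by $mn+l+1,\dots,m(n+1)$, setting $a_{mn+i,j}=a_{j,mn+i}=0$ whenever $i>l$ and $mn+i\neq j$, and $a_{mn+i,mn+i}=1$ for $i>l$.

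First I would check that the enlarged family $\{a_{ij}\}_{1\le i,j\le m(n+1)}$ still satisfies the defining relations (\ref{relation a1})--(\ref{relation a4}) of $M_q(m(n+1))$. This is immediate: in any relation involving one of the new indices, at least one of the matrix entries appearing is either $0$ or the central scalar $1$, so each relation degenerates to a trivial identity --- a scalar commutes with everything, and a product with a zero factor vanishes on both sides. Next I would verify that ${\det}_q(\tilde A)={\det}_q(A)$. Expanding ${\det}_q(\tilde A)$ by the quantum Cramer rule along the last row (index $m(n+1)$), only the diagonal entry $a_{m(n+1),m(n+1)}=1$ survives, and since its sign is $(-q)^{0}=1$ the expansion collapses to the cofactor obtained by deleting that row and column; the remaining block is again a quantum matrix of size one smaller, so iterating over the rows $m(n+1),m(n+1)-1,\dots,mn+l+1$ yields ${\det}_q(\tilde A)={\det}_q(A)$.

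Finally, applying the column version of Theorem~\ref{qhp} to $\tilde A$ with $[i_1,\dots,i_m]_m=b_{i_1,\dots,i_m}=\sum_{i=1}^{n+1}\xi_{m(i-1)+1,\dots,mi}^{i_1,\dots,i_m}$ gives $[1,2,\dots,m(n+1)]_m={\det}_q(\tilde A)$, and combining with the previous step produces ${\det}_q(A)=[1,2,\dots,m(n+1)]_m$, as claimed. The vanishing (or collapse to lower minors) of the $\xi$'s attached to a padded row is exactly what keeps the formula for $[i_1,\dots,i_m]_m$ consistent in the non-divisible order, so no separate compatibility check is needed there.

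The only place requiring care --- and hence the main obstacle --- is the bookkeeping in the middle step: one must confirm that after deleting a trivial last row and column the remaining block genuinely lies in the smaller quantum matrix algebra, and that every sign in the iterated Laplace expansion is $+1$, so that no stray power of $q$ is introduced. Everything else is a direct invocation of the preceding theorem, exactly as the earlier corollary for ordinary $q$-Pfaffians was deduced from the $2n$-order case.
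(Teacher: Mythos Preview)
Your proposal is correct and follows essentially the same route the paper takes (implicitly here, and explicitly in the earlier analogous corollary for ordinary $q$-Pfaffians): pad the $(mn+l)\times(mn+l)$ quantum matrix to size $m(n+1)$ with trivial rows and columns, observe that the enlarged family still satisfies the relations \eqref{relation a1}--\eqref{relation a4}, and then invoke the preceding column-version theorem. Your added details---the Laplace expansion showing ${\det}_q(\tilde A)={\det}_q(A)$ and the remark that the iterated cofactors remain quantum matrices---simply make explicit what the paper leaves to the reader.
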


\centerline{\bf Acknowledgments}
The first named author
gratefully acknowledges the support of Max-Planck Institut f\"ur Mathematik in Bonn, Simons Foundation,
NSFC
and NSF.

 \vskip 0.1in

\bibliographystyle{amsalpha}

\end{document}